\def\a{\alpha}
\def\b{\beta}
\def\zb{\hat{\textbf{0}}}
\def\d{\delta}
\newcommand{\inv}{^{-1}}
\newtheorem{theorem}{Theorem}
\newtheorem{lemma}{Lemma}
\newtheorem{prop}{Proposition}
\newtheorem{cor}{Corollary}
\newtheorem{rem}{Remark}
\theoremstyle{definition}
\newtheorem{definition}{Definition}
\newcommand{\rk}{\operatorname{rk}}
\begin{document}
\title[Kazhdan--Lusztig polynomials of $\rho$-removed uniform matroids]{A combinatorial formula for Kazhdan--Lusztig polynomials of $\rho$-removed uniform matroids}

\author[K. Lee]{Kyungyong Lee}
\thanks{K.L. was partially supported by the University of Alabama, University of Nebraska--Lincoln, Korea Institute for Advanced Study, and the NSF grant DMS 1800207.}
\address{Department of Mathematics, University of Alabama, Tuscaloosa, AL 35401 U.S.A., and School of Mathematics, Korea Institute for Advanced Study, Seoul 02455 Republic of Korea}
\email{kyungyong.lee@ua.edu; klee1@kias.re.kr}

\author{George D. Nasr}
\address{Department of Mathematics, University of Nebraska--Lincoln, Lincoln, NE 68588, U.S.A.}
\email{george.nasr@huskers.unl.edu}
\author{Jamie Radcliffe}
\thanks{Jamie Radcliffe was supported in part by Simons grant number 429383.}
\address{Department of Mathematics, University of Nebraska--Lincoln, Lincoln, NE 68588, U.S.A.}
\email{jamie.radcliffe@unl.edu}

\begin{abstract}
Let $\rho$ be a non-negative integer. A $\rho$-removed uniform matroid is a matroid obtained from a uniform matroid by removing a collection of $\rho$ disjoint bases.
We present a combinatorial formula for Kazhdan--Lusztig polynomials of $\rho$-removed uniform matroids, using  
skew Young Tableaux. Even for uniform matroids, our formula is new, gives manifestly positive integer coefficients, and is more manageable than known formulas. 
\end{abstract}
\maketitle

\section{Introduction}
The Kazhdan--Lusztig polynomial of a matroid was introduced by Elias, Proudfoot, and Wakefield in 2016 \cite{firstklpolys}, which we define here. Throughout, let $M$ be a matroid, $F$ be a flat of the matroid $M$, $\rk$ be the rank function on $M$, and $\chi_M$ be the characteristic function for a matroid. We denote $M^F$ (respectively $M_F$) for the localization (resepctively contraction) for $M$ at $F$. Then, the Kazhdan-Lusztig polynomial for $M$, denoted $P_M(t)$ is given by the following conditions:
\begin{enumerate}
\item If $\rk M=0$, then $P_M(t)=1$.
\item If $\rk M>0$, then $\deg P_M(t)<{1\over 2} \rk M$. 
\item $\displaystyle t^{rk M}P_M(t\inv)=\sum_{F: \text{ a flat}} \chi_{M^F}(t)P_{M_F}(t)$.
\end{enumerate}

 Since their introduction, these polynomials have drawn active research efforts. Mostly, this is due to their (conjecturally) nice properties, such as positivity and real-rootedness (see \cite{firstklpolys,thag,klum,fanwheelwhirl,equithag}). There has also been much effort put into finding relations between these polynomials or generalizations thereof (see \cite{deletion,kls,pfia}). However, these polynomials have been explicitly calculated only for very special classes of matroids (for instance, see \cite{stirling,klum,klpolys,fanwheelwhirl,qniform}), and yet many of the known formulas have left much room for improvement. In particular, as of now, there is no enlightening interpretation for such coefficients. The goal of this paper is to provide a manifestly positive integral formula for the coefficients of specific matroids by identifying them with a combinatorial object.

In what follows, $U_{m,d}(\rho)$ is the matroid constructed by starting with the uniform matroid of rank $d$ on $m+d$ elements, and removing $\rho$ disjoint bases. Our main result is as follows. 
\begin{theorem}\label{thm:main}
Let ${c^i_{m,d}}(\rho)$ be the $i$-th coefficient for the Kazhdan-Lusztig polynomial for the matroid $U_{m,d}(\rho)$. Then
\[{c^i_{m,d}}(\rho)=\# SkYT_\rho(m+1,i,d-2i+1),\]
where $SkYT_\rho(m+1,i,d-2i+1)$ is the set of legal fillings $\alpha$ on the following skew Young diagram
 \begin{center}
\begin{tikzpicture}[scale=0.2, line width=1pt]
  \draw (-1,0) grid (0,6);
\draw[decoration={brace,raise=7pt},decorate]
 (-1,0) -- node[left=7pt] {$m+1$} (-1,6);
 
  \draw (0,4) grid (4,6);
\draw[decoration={brace,mirror, raise=4pt},decorate]
 (0.1,4) -- node[below=7pt] {$i$} (5,4);
 
  \draw (4,4) grid (5,9);
\draw[decoration={brace,mirror, raise=5pt},decorate]
 (5,4) -- node[right=7pt] {$d-2i+1$} (5,9); 
\end{tikzpicture}
\end{center}
with the additional restrictions that 

\noindent$\bullet$ the top entry of the right-most column of $\alpha$ is 1; or

\noindent$\bullet$ the bottom entry of the right-most column  is greater than $d+\rho$; or

\noindent$\bullet$ the third entry (from the top) of the left-most column  is less than $d+1$.
\end{theorem}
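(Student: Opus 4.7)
The plan is to verify the claimed formula by induction on the rank $d$, showing that
\[
Q_M(t) := \sum_{i \ge 0} \#\,SkYT_\rho(m+1, i, d-2i+1) \, t^i
\]
satisfies the three defining conditions of the Kazhdan--Lusztig polynomial for $M = U_{m,d}(\rho)$; by the uniqueness of $P_M$ this forces $Q_M = P_M$.

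The base case $d=0$ and the degree bound $\deg P_M(t) < d/2$ should be the easier verifications. When $d=0$ the diagram degenerates to a single column of $m+1$ cells with a unique legal filling, matching $P_M = 1$. The requirement $d - 2i + 1 \ge 1$ automatically forces $i \le d/2$, and the borderline case $i = d/2$ (only possible for even $d$), where the right-most column reduces to a single cell, should be ruled out by unpacking the three restrictions on that column.

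The central task is verifying the recursion
\[
t^d P_M(t^{-1}) \;=\; \sum_{F \text{ flat of } M} \chi_{M^F}(t) \, P_{M_F}(t).
\]
To use it I would first enumerate the flats of $U_{m,d}(\rho)$. The disjointness of the removed bases makes this clean: every $k$-subset with $k \le d-2$ is a flat; the rank-$(d-1)$ flats split into the ordinary $(d-1)$-subsets not contained in any removed basis together with the $\rho$ removed bases themselves (each of size $d$ but of rank $d-1$); and the full ground set is the unique rank-$d$ flat. For each such $F$, the localization $M^F$ is a uniform matroid (whose characteristic polynomial is classical) and the contraction $M_F$ is again a $\rho'$-removed uniform matroid with $\rho'$ depending on how $F$ meets the removed bases --- the disjointness assumption guarantees that every $k$-subset for $k \le d-2$ lies in at most one removed basis, which keeps the recursion within the family of $\rho'$-removed uniform matroids. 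Substituting the inductive formula for $P_{M_F}$ reduces the recursion to a combinatorial identity among tableau counts.

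The main obstacle is proving this combinatorial identity, and I expect the three ``or'' restrictions in the theorem to match the three types of non-trivial flat contributions: a filling whose top-right entry equals $1$ should correspond to the ground-set flat, a filling whose bottom-right entry exceeds $d+\rho$ should correspond to one of the $\rho$ removed-basis flats (explaining the shift by $\rho$), and a filling whose third left-column entry is less than $d+1$ should correspond to the generic rank-$(d-1)$ flats. I would try to build a bijection stratifying $SkYT_\rho(m+1, i, d-2i+1)$ by which clause is ``activated'', together with an inclusion--exclusion or sign-reversing argument to handle tableaux satisfying more than one clause. Designing this bijection so that the $\chi_{M^F}(t)$ factors arise naturally from the legal-filling data, and in particular proving that the region forbidden by all three clauses corresponds precisely to terms that cancel in the recursion, is where the bulk of the technical difficulty should lie.
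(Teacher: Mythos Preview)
Your overall strategy---induction on $d$ via the defining recursion, together with the enumeration of flats, localizations, and contractions---is exactly what the paper does, and your description of the flat structure is essentially correct.  The gap is in the last paragraph, where you propose to prove the resulting identity by a bijection that matches each of the three ``or'' clauses to a type of flat in the recursion.  That interpretation is not how the three clauses function, and there is no evidence in your proposal that such a bijection exists.

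In the paper the three clauses have nothing to do with flat types.  They are simply the complement, inside $SkYT(m+1,i,d-2i+1)$, of $\rho$ disjoint embedded copies of a smaller set $\overline{SkYT}(i,d-2i+1)$ (tableaux with $1$ at the top of the left column and the largest entries filling the left tail).  Concretely, an explicit injection $\{0,\dots,\rho-1\}\times\overline{SkYT}\hookrightarrow SkYT$ is built, and a tableau lies in $SkYT_\rho$ exactly when it is \emph{not} in the image; negating the image description yields the three disjunctive clauses.  So the theorem is first reduced to the subtraction form
\[
c^i_{m,d}(\rho)=\#SkYT(m+1,i,d-2i+1)-\rho\,\#\overline{SkYT}(i,d-2i+1),
\]
and it is this form that is fed into the KL recursion.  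After substituting the inductive hypothesis, the recursion does not collapse bijectively: it reduces to two separate algebraic identities (one for the $SkYT$ part, one for the $\overline{SkYT}$ part), each of which is proved by rewriting the inner sum via an inclusion--exclusion between $SkYT$ and ordinary $SYT$, applying the hook-length formula, and then evaluating an alternating binomial sum using a generating-function/integration trick.  None of the flat contributions correspond cleanly to a single clause; in particular the ground-set flat contributes $\chi_{U_{m,d}(\rho)}(t)$, which has alternating signs and cannot be a subset count on its own.

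So: keep your induction and your flat analysis, but abandon the clause-to-flat bijection.  Either discover the subtraction reformulation above (which immediately explains the $\rho$-dependence and reduces you to proving two $\rho$-free identities), or be prepared for the combinatorial identity to require genuine algebraic manipulation rather than a sign-reversing involution.
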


Since $U_{m,d}(\rho)$ is representable, this in turn gives a combinatorial formula for the intersection cohomology Poincar\'{e} polynomial of the corresponding reciprocal plane over a finite field, thanks to \cite{firstklpolys}.

Note that when $\rho=0$, the matroid $U_{m,d}(\rho)$ is in fact $U_{m,d}$, the uniform matroid of rank $d$ on $m+d$ elements. Hence, in this case, Theorem \ref{thm:main} gives a new manifestly positive integral formula\footnote{The first (and only known) manifestly positive integral interpretation for uniform matroids was given in \cite[Remark 3.4]{klpolysequiv}, which requires possibly many Young diagrams.} for the coefficients corresponding to the uniform matroid, which is simply the number of all legal fillings on the diagram above. This is because the bottom entry of the right-most column is always greater than $d$. In this case, we get the following alternative of Theorem 1.\\

\noindent \textbf{Theorem 2.} \textit{Let $c_{m,d}^i$ be the $i$th coefficient of the Kazhdan-Lusztig Polynomial for the uniform matroid of rank $d$ on $m+d$ elements. Then}
$$c_{m,d}^i=\#SkYT(m+1,i,d-2i+1),$$
\textit{where $\#SkYT(m+1,i,d-2i+1)$ is just the number of legal fillings of the above shape, with no other restrictions.}

 When $m+1=2$ or $d-2i+1=2$, this number becomes equal to a well-known number, namely the number of polygon dissections \cite{S}. Hence, when $m+1=d-2i+1=2$, it becomes a Catalan number.\footnote{This connection to polygon dissections was already mentioned in Remark 5.3 of \cite{klpolys}, but with the discovery of our combinatorial object, this fact follows directly from \cite{S}.}

This paper is organized as follows. In section \ref{sec:SYT}, we introduce the central combinatorial object to this paper and prove useful identities about it. In section \ref{sec:pot1}, we prove Theorem \ref{thm:comb_int}, which is a proof of Theorem \ref{thm:main} in the case of $\rho=0$. We then discuss a symmetry condition on the coefficients of the uniform matroid (already mentioned in \cite{klpolysequiv}) in section \ref{sec:symmetries} by using our combinatorial object. Finally, we end the paper with section \ref{sec:rho_removed} by rigorously defining the matroid $U_{m,d}(\rho)$, defining $SkYT_\rho(a,i,b)$, and providing a proof of a Theorem (called Theorem \ref{thm:once_removed}), which is equivalent to Theorem \ref{thm:main}. Note that the statements of the two theorems are in fact equivalent, and also that the theorems imply the $\rho=0$, so between Theorems \ref{thm:comb_int} and \ref{thm:once_removed} we actually give two proofs for the case where $\rho=0$. It is however still worth mentioning Theorem \ref{thm:comb_int} as it can be proved using a lot less work.
 
 \section{Skew Young Tableaux}\label{sec:SYT}
 
 Consider the following shape.
 \begin{center}
\begin{figure}[h]

\begin{tikzpicture}[scale=0.4, line width=1pt]
  \draw (-1,0) grid (0,6);
\draw[decoration={brace,raise=7pt},decorate]
 (-1,0) -- node[left=7pt] {$a$} (-1,6);
 
  \draw (0,4) grid (4,6);
\draw[decoration={brace,mirror, raise=4pt},decorate]
 (0.1,4) -- node[below=7pt] {$i$} (5,4);
 
  \draw (4,4) grid (5,9);
\draw[decoration={brace,mirror, raise=5pt},decorate]
 (5,4) -- node[right=7pt] {$b$} (5,9); 
\end{tikzpicture}
\caption{The left-most column has height $a$, followed by $i-1$ columns of height 2, followed by the right-most column of height $b$.}
\end{figure}
\end{center} 
A \textit{legal filling }of the above shape involves placing each number from $\{1,2,\dots, a+2i+b-2\}$ into the squares such that the values in the columns and rows strictly increase going down and right, respectively. Note that this is the same restriction on the entries of a standard young tableau, but the above shape does not fit the description of the typical young tableau. We refer to legal fillings of the above shape as \textit{skew young tableau}, and denote $SkYT(a,i,b)$ as the set of such legal fillings. There are some exceptional values we have set conventions for: 
\begin{itemize}
\item If $i=0$, $SkYT(a,i,b)=1$.
\item If $i>0$ and at least one of $a$ or $b$ is less than 2, $SkYT(a,i,b)=0$.
\end{itemize} %Note that if $a<2$ or $b<2$, we define $SkYT(a,i,b)$ to be 0, and if $i<1$, define $SkYT(a,i,b)$ to be 1.

Using this object we will achieve the following result.
\begin{theorem}\label{thm:comb_int}
Let $c_{m,d}^i$ be the $i$th coefficient of the Kazhdan-Lusztig Polynomial for the uniform matroid of rank $d$ on $m+d$ elements. Then
$$c_{m,d}^i=\#SkYT(m+1,i,d-2i+1)$$
\end{theorem}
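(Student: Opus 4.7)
The plan is to prove Theorem \ref{thm:comb_int} by showing that both sides satisfy the same recursion, the one coming from the defining relation (3) of the Kazhdan--Lusztig polynomial, together with the degree bound (2). Uniqueness of the KL polynomial under these conditions then forces equality.

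First I would make the recursion explicit for $U_{m,d}$. The flats are the empty set, every proper subset of size $k<d$, and the full ground set. For a $k$-element flat $F$ with $k<d$ one has $M^F$ isomorphic to a Boolean matroid on $k$ elements, so $\chi_{M^F}(t)=(t-1)^k$, while the contraction is $M_F\cong U_{m,d-k}$. The top flat contributes the single term $\chi_{U_{m,d}}(t)$. Substituting these into the relation $t^d P_{U_{m,d}}(t\inv)=\sum_F\chi_{M^F}(t)P_{M_F}(t)$ and extracting the coefficient of $t^i$ yields a linear recurrence expressing $c^i_{m,d}$ in terms of $c^{i'}_{m,d-k}$ for $k\ge 1$, with binomial multiplicities $\binom{m+d}{k}$ counting the flats of size $k$ and signed contributions $(-1)^{k-j}\binom{k}{j}$ coming from the expansions of $(t-1)^k$. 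Because condition (2) forces $c^i_{m,d}=0$ for $2i\ge d$, this recurrence together with the base case in rank $0$ determines $c^i_{m,d}$ uniquely.

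Next I would prove that $f(m,d,i):=\#SkYT(m+1,i,d-2i+1)$ satisfies the same recurrence and the same vanishing conditions. The idea is to find a combinatorial decomposition of legal fillings that mirrors the sum over flats. The most natural parameter to condition on is the set of entries occupying the leftmost column (or equivalently the set of entries that appear on the ``top rim'' of the diagram). Removing this column should leave a smaller skew diagram to which the identities from Section \ref{sec:SYT} apply, and the resulting count will naturally split according to the position of boundary entries in a way that produces the binomial coefficients $\binom{m+d}{k}$ and the alternating signs from $(-1)^{k-j}\binom{k}{j}$. Finally the vanishing conventions (namely $SkYT(a,i,b)=0$ once $a<2$ or $b<2$ with $i>0$) match the degree bound, and both sides equal $1$ when $i=0$.

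The main obstacle will be matching the signed sum on the KL side with a naturally positive combinatorial decomposition on the tableaux side. Concretely, the Boolean factors $(t-1)^k$ produce terms $(-1)^{k-j}t^j$ whose alternating signs must be cancelled through a sign-reversing involution on ``bad'' fillings, or absorbed through an inclusion-exclusion argument identifying ``allowed'' fillings with legal ones. Getting the right involution---most likely one that swaps or shifts the entries sitting in the top of the leftmost column against the first few entries of the staircase---will be the delicate step, and it is presumably where the preparatory identities proved in Section \ref{sec:SYT} do their real work. Once that involution is found, the rest is bookkeeping: verifying the binomial coefficients line up, the indices shift correctly, and the $i=0$ and out-of-range cases are handled by the stated conventions.
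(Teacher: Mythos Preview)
Your approach is valid in principle but differs substantially from the paper's proof of Theorem~\ref{thm:comb_int}. The paper does \emph{not} verify the KL recursion here; instead it quotes the closed formula for $c^i_{m,d}$ from \cite[Theorem 1.4]{klum}, computes $\#SkYT(a,i,b)$ via Lemma~\ref{lem:syt_count} and the hook-length formula, and then proves the two sides agree by showing that suitable double generating functions both equal $\dfrac{x^2y^2}{(1-x)(1-y)^2(1-x-y)^i}$. So the paper's first proof is purely algebraic and rests on a result already in the literature, whereas you propose to redo the KL induction from scratch.

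Your strategy is in fact closer to what the paper does later, in Section~\ref{sec:rho_removed}, where Theorem~\ref{thm:once_removed} (which specializes to Theorem~\ref{thm:comb_int} at $\rho=0$) \emph{is} proved by plugging into the defining recursion and checking that the tableau counts satisfy it. But note how that argument actually goes: the crucial step is Lemma~\ref{lem:SkYT_Identity}, and its proof is not a sign-reversing involution on fillings. It first uses the inclusion-exclusion of Lemma~\ref{lem:skyt_to_sky_count} to convert the $SkYT$ counts into $SYT$ counts, then applies the hook-length formula, and finally proves the resulting binomial identity by differentiating and integrating an auxiliary polynomial (Proposition~\ref{prop:integral_identity}). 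Your sketch anticipates that ``the preparatory identities in Section~\ref{sec:SYT} do their real work'' via an involution that swaps entries between the leftmost column and the staircase; that is not what happens, and I do not see an obvious involution that would produce the signed sum $\sum_{j,k}(-1)^{j-i+k}\binom{j}{j-i+k}\binom{m+d}{j}\#SkYT(m+1,k,d-j-2k+1)$ directly. The recursion moves in $d$, not in the column parameter, so ``remove the leftmost column'' does not obviously model passage from $U_{m,d}$ to $U_{m,d-k}$. If you pursue this route, plan on proving Lemma~\ref{lem:SkYT_Identity} (or an equivalent identity) algebraically rather than bijectively.
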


The proof of this Theorem relies on determining the size of $SkYT(a,i,b)$ explicitly. For this, we take advantage of being able to find the number of standard young tableau explicitly, thanks to the hook length formula. We define $SYT(a,i,k)$ to be the set of standard young tableaux of the following shape.

 \begin{center}
\begin{figure}[h]

\begin{tikzpicture}[scale=0.5, line width=1pt]
  \draw (-1,0) grid (0,6);
\draw[decoration={brace,raise=7pt},decorate]
(-1,0) -- node[left=7pt] {$a$} (-1,6);
 
  \draw (0,4) grid (4,6);
\draw[decoration={brace,mirror, raise=4pt},decorate]
 (0.1,4) -- node[below=7pt] {$i$} (4,4);

  \draw (4,5) grid (7,6);
\draw[decoration={brace, raise=5pt},decorate]
 (4,6) -- node[above=7pt] {$k$} (7,6); 
\end{tikzpicture}
%\caption{}
\end{figure}\label{fig:tablerec}
\end{center} 
\begin{lemma}\label{lem:syt_count}
\[\#SkYT(a,i,b)=\sum_{k=0}^{b-2} (-1)^k{a+2i+b-2 \choose b-k-2} \#SYT(a,i,k), \]

\begin{proof}
 Observe one could build $SkYT(a,i,b)$ by starting with a young diagram $\mu$ with $b-2$ parts of size 1, choosing the elements from $[a+b+2i-2]$ to place in there in increasing order, and then from the remaining numbers, place them in one of $\#SYT(a,i,0)$ ways, giving a tableau $\lambda$, and then attaching $\mu$ to $\lambda$ so that the bottom of $\mu$ is adjacent to the top right of $\lambda$. See the below figure. 
 
 \begin{center}
 \begin{figure}[h]
\begin{tikzpicture}[scale=0.5, line width=1pt]
  \draw (-1,0) grid (0,6);
\draw[decoration={brace,raise=7pt},decorate]
 (-1,0) -- node[left=7pt] {$a$} (-1,6);
 
  \draw (0,4) grid (5,6);
\draw[decoration={brace,mirror, raise=4pt},decorate]
 (0.1,4) -- node[below=7pt] {$i$} (5,4);
 
  \draw (6,2.) grid (7,5.);
\draw[decoration={brace,mirror, raise=5pt},decorate]
 (7,2) -- node[right=7pt] {$b-2$} (7,5); 
 
  \draw[|->] (10,4) -- (13,4); 
  
  \draw (15,0) grid (16,6);
\draw[decoration={brace,raise=7pt},decorate]
 (15,0) -- node[left=7pt] {$a$} (15,6);
 
  \draw (16,4) grid (21,6);
\draw[decoration={brace,mirror, raise=4pt},decorate]
 (16.1,4) -- node[below=7pt] {$i$} (22,4);
 
  \draw (21,4) grid (22,9);
\draw[decoration={brace,mirror, raise=5pt},decorate]
 (22,6) -- node[right=7pt] {$b-2$} (22,9); 
\end{tikzpicture}
\caption{Tableaux $\lambda$ and $\mu$ combine to give the shape desired skew-symmetric tableau shape.}
\end{figure}
\end{center}
 
 Of course, these pieces are only compatible if the entry in the bottom entry of $\mu$ is smaller than the top right of $\lambda$, so we need to remove the cases not giving legal fillings. By moving the bottom square of $\mu$ the right of the top right piece of $\lambda$, we have a bijection between this case and having a pair of tableau, one standard young tableau with $b-3$ parts of size 1 and the other from $SYT(a,i,1)$, that we wish to remove from the possible count. Of course, this will also remove cases where the second to last entry in $\mu$ is larger than the last entry, which was not accounted for before since we assumed we placed the entries in $\mu$ in increasing order, so we wish to add these cases back in. We can count this in a similar way by counting the number of pairs of standard young tableaux where one is $b-4$ parts of size 1, and then selecting an element from $SYT(a,i,2)$. Continuing this process gives the right hand side of the desired equality in the statement of Lemma \ref{lem:syt_count}, and by an inclusion-exclusion argument, we have also counted the left.

\end{proof}
\end{lemma}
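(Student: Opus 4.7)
The plan is to prove the identity via a bijective inclusion--exclusion built from a single swap move between a thin column and a growing top row. Set $N = a + 2i + b - 2$, and for each $0 \le k \le b - 2$ let $X_k$ be the set of pairs $(\mu, \lambda)$ in which $\mu$ is the single column of height $b-2-k$, filled in increasing order by some $(b-2-k)$-subset of $[N]$, and $\lambda \in SYT(a,i,k)$ is filled by the complementary subset. Because a single column has a unique strict filling once its entries are chosen, $|X_k| = \binom{N}{b-2-k}\#SYT(a,i,k)$, matching the $k$-th summand on the right.

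The first step is to identify the left-hand side as an admissibility count inside $X_0$. A legal filling of the skew shape may be cut along the horizontal edge separating the upper extension of the right column from the top row of the $2\times(i-1)$ strip: the upper piece becomes a column $\mu$ with $b-2$ entries in increasing order, and the rest is a standard filling $\lambda$ of shape $SYT(a,i,0)$. This cut corresponds to a legal skew filling precisely when the bottom entry of $\mu$ is smaller than the top-right entry of $\lambda$. Call a pair $(\mu,\lambda)\in X_k$ \emph{admissible} if either $\mu$ is empty or its bottom entry is less than the top-right entry of $\lambda$; let $Z_k\subseteq X_k$ be the admissible pairs and $Y_k = X_k\setminus Z_k$. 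Then $\#SkYT(a,i,b)=|Z_0|$.

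The key step is a bijection $\phi\colon Y_k \to Z_{k+1}$ defined by removing the bottom (largest) entry $v$ of $\mu$ and appending it at the right end of the top row of $\lambda$. It is well-defined because $v$ exceeds the old top-right of $\lambda$ (the defining condition of $Y_k$), so the extended top row is still strictly increasing; it lands in $Z_{k+1}$ because the new top-right entry $v$ is the old maximum of $\mu$ and therefore strictly exceeds every surviving entry of $\mu$. The inverse is equally transparent: detach the rightmost top-row entry of $\lambda'$ and place it at the bottom of $\mu'$; the admissibility of $(\mu',\lambda')\in Z_{k+1}$ is exactly what guarantees that the reconstituted column is strictly increasing.

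Consequently $|Y_k|=|Z_{k+1}|$, so $|Z_k| = |X_k|-|Z_{k+1}|$. The recursion terminates at $k=b-2$, where $\mu$ is empty and hence $Y_{b-2}=\emptyset$ and $|Z_{b-2}|=|X_{b-2}|$. Unrolling gives $|Z_0|=\sum_{k=0}^{b-2}(-1)^k|X_k|$, which is the claimed identity. The main care needed is at the boundary $k=b-3$, where $\mu'$ becomes empty after the move and one must invoke the default admissibility convention for empty columns so that the bijection remains valid; once that is handled the rest is formal.
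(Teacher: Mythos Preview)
Your proof is correct and follows essentially the same approach as the paper: both decompose the skew shape into a column $\mu$ of height $b-2$ atop the base shape $SYT(a,i,0)$, and both run an inclusion--exclusion driven by the single move ``take the bottom entry of $\mu$ and append it to the right end of the top row of $\lambda$.'' Your formalization via the sets $X_k$, $Z_k$, $Y_k$ and the explicit bijection $\phi\colon Y_k\to Z_{k+1}$, together with the resulting recursion $|Z_k|=|X_k|-|Z_{k+1}|$, is a cleaner and more careful statement of exactly the argument the paper sketches verbally.
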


\section{Proof of Theorem \ref{thm:comb_int}}\label{sec:pot1}
%where \[\#SYT(a,i,k)=\frac{(a+2i+k)!(k+1)!}{(a-2)!(a+i-1)i!(a+i+k)(i+k+1)!k!}.\]
% The formula for $\#SYT(a,i,k)$ is a consequence of the hook-length formula when scaled by $(k+1)!/(k+1)!$.
%\textit{Proof of Theorem \ref{thm:comb_int}} 
Let $a=m+1$ and $b=d-2i+1$. We instead prove the statement with our change of coordinates, that is, we show $c_{a-1,b+2i-1}=\#SkYT(a,i,b)$. Utilizing \cite[Theorem 1.4]{klum}, we already know that
\[c_{a-1,b+2i-1}^i={1\over b+i-1}{b+2i+a-2 \choose i}\sum_{h=0}^{a-2} {b+i+h-1\choose h+i+1}{i-1+h\choose h}.\]  
In Section \ref{sec:symmetries}, we prove Lemma \ref{lem:sym} which gives the identity $\#SkYT(a,i,b)=\#SkYT(b,i,a)$. Hence,
\begin{align*}
\#SYT(b,i,k){a+2i+b-2 \choose a-k-2}&={1\over (b+i-1)}\frac{(b+2i+k)!(k+1)}{(b-2)!i!(b+i+k)(i+k+1)!}{(a+2i+b-2)! \over (a-k-2)!(2i+k+b)!}\\
&={1\over (b+i-1)}\frac{(k+1)(a+2i+b-2)!}{(b-2)!i!(b+i+k)(i+k+1)!(a-k-2)!}\\
&={1\over (b+i-1)}{a+2i+b-2 \choose i}\frac{(k+1)(a+i+b-2)!(b+i+k-1)!}{(b-2)!(b+i+k)!(i+k+1)!(a-k-2)!}\\
&={1\over (b+i-1)}{a+2i+b-2 \choose i}(k+1){a+i+b-2 \choose b+i+k}{b+i+k-1 \choose b-2},\\
\end{align*}
Hence, it suffices to show that 
\[\sum_{k=0}^{a-2} (-1)^k(k+1){a+i+b-2 \choose b+i+k}{b+i+k-1 \choose b-2}=\sum_{h=0}^{a-2} {b+i+h-1\choose h+i+1}{i-1+h\choose h}.\]  
We instead prove that 
\[\sum_{a,b\geq 0}x^ay^b\sum_{k=0}^{a-2} (-1)^k(k+1){a+i+b-2 \choose b+i+k}{b+i+k-1 \choose b-2}=\sum_{a,b\geq 0}x^ay^b\sum_{h=0}^{a-2} {b+i+h-1\choose h+i+1}{i-1+h\choose h}.\]  
We take advantage of the following two things:
\begin{enumerate}
\item $\displaystyle \sum_{k=0}^\infty {k+i\choose k}x^k={1\over (1-x)^{i+1}}$
\item $\displaystyle \sum_{k\geq 0} (-1)^k(k+1)z^k={d\over dz}z\sum_{k\geq 0} (-1)^kz^k={d\over dz}{z\over 1+z}={1\over (1+z)^2}$
\end{enumerate}
Now on the one hand, we have 
\begin{align*}
&\sum_{a,b,k\geq 0}x^ay^b (-1)^k(k+1){a+i+b-2 \choose b+i+k}{b+i+k-1 \choose b-2}\\
&=\sum_{b,k\geq 0}y^b (-1)^k(k+1){b+i+k-1 \choose b-2}\sum_{a\geq 0}x^a{a+i+b-2 \choose b+i+k}\\
&=\sum_{b,k\geq 0}y^b (-1)^k(k+1){b+i+k-1 \choose b-2}x^{k+2}\sum_{a\geq k+2}x^{a-k-2}{a+i+b-2 \choose a-k-2}\\
&={x^2\over (1-x)^{i+1}}\sum_{k\geq 0} (-1)^k(k+1)\left({x\over 1-x}\right)^k\sum_{b\geq 0}{b+i+k-1 \choose i+k+1}\left({y\over 1-x}\right)^b\\
&={x^2\over (1-x)^{i+1}}\sum_{k\geq 0} (-1)^k(k+1)\left({x\over 1-x}\right)^k\left({y\over 1-x}\right)^2\sum_{b\geq 0}{b+i+k+1 \choose i+k+1}\left({y\over 1-x}\right)^b\\
&={x^2y^2\over (1-x)^{i+3}(1-{y\over 1-x})^{i+2}}\sum_{k\geq 0} (-1)^k(k+1)\left({x\over 1-x-y}\right)^k\\
&={x^2y^2\over (1-x)^{i+3}(1-{y\over 1-x})^{i+2}}{1\over (1+{x\over 1-x-y})^2}\\
&={x^2y^2\over (1-x)(1-x-y)^{i}(1-y)^2}.
\end{align*}

On the other hand, we have 
\begin{align*}
&\sum_{a,b}x^ay^b\sum_{h=0}^{a-2} {b+i+h-1\choose h+i+1}{i-1+h\choose h}\\
&=\sum_{a\geq 0}x^a\sum_{h=0}^{a-2} {i-1+h\choose h}\sum_{b\geq 0} y^b{b+i+h-1\choose h+i+1}\\
&=\sum_{a\geq 0}x^a\sum_{h=0}^{a-2} {i-1+h\choose h}y^2\sum_{b\geq 0} y^b{b+i+h+1\choose h+i+1}\\
&={y^2\over (1-y)^{i+2}}\sum_{h\geq 0} {i-1+h\choose h}{1\over (1-y)^h}\sum_{a\geq h+2}x^a\\
&={x^2y^2\over (1-y)^{i+2}(1-x)}\sum_{h\geq 0} {i-1+h\choose h}\left({x\over 1-y}\right)^h\\
&={x^2y^2\over (1-y)^{i+2}(1-x)}{1\over (1-{x\over 1-y})^i}\\
&={x^2y^2\over (1-y)^{2}(1-x)(1-y-x)^i}.
\end{align*} 
$\hfill \square$

\section{Symmetries}\label{sec:symmetries}

This combinatorial realization does more than provide a manifestly positive and integral interpretation for these coefficients. In \cite{klpolysequiv}, they define a new polynomial called the equivariant KL polynomial for the uniform matroid, and use it to observe a surprising symmetry in the coefficients of the equivariant KL polynomial for uniform matroid. If we let $C_{m,d}^i$ be the $i$th coefficient of the equivariant KL polynomial for the uniform matroid of rank $d$ on $m+d$ elements, $C_{m,d}^i=C_{d-2i,m+2i}^i$, remarking that they see ``no philosophical reason why this symmetry should exist'' \cite[Remark 3.5]{klpolysequiv}. They are able to use $C_{m,d}^i$ to recover $c_{m,d}^i$, and so the same symmetry is true for the latter. We recover this  symmetry by observing symmetry in our skew symmetric tableaux.

\begin{lemma}\label{lem:sym}
\[\# SkYT(a,i,b)=\#SkYT(b,i,a)\]
\begin{proof}
Given $\a\in SkYT(a,i,b)$, define $\bar{\a}\in SkYT(b,i,a)$ in the following way. Let $n$ be the maximum value for the entries of the elements of $SkYT(a,i,b)$, and hence also $SkYT(b,i,a)$. Replace each number $i$ in $\a$ with $n+1-i$, and rotate the shape 180 degrees, so that the shape corresponds to the elements of $SkYT(b,i,a)$. This map is necessarily an involution. 

This process is also well defined. Let $x$ and $y$ be two positions in $\a$ containing entries $i,j\in [n]$ respectively. Suppose $x$ and $y$ are positioned so that the entry in $x$ is required to be smaller than the entry in $y$. This is to say that $x$ is to the right or above $y$ (or both). This also gives us that $i<j$. Our above map replaces the entries of $x$ and $y$ with $n+1-i$ and $n+1-j$, and then rotates $\a$ giving us $\bar{\a}$. When we do this, if $x$ was above $y$, it is now below, and likewise with being to the right versus left. Regardless, there relative locations now require the value of $y$ to be less than $x$, which is indeed true since $i<j$, giving this map is indeed well-defined. The figure below gives an example of this map.
\end{proof}\begin{center}
\begin{figure}[h]
 \begin{minipage}{.3\textwidth}
 \begin{center}
 
\begin{tikzpicture}[scale=0.5, line width=1pt]
  \draw (-1,0) grid (0,4);
 \node[] (x1) at (-.5,3.5) {$2$};
 \node[] (x1) at (-.5,2.5) {$3$};
 \node[] (x1) at (-.5,1.5) {$10$};
 \node[] (x1) at (-.5,.5) {$11$};
 
  \draw (0,2) grid (2,4);
 \node[] (x1) at (.5,3.5) {$4$};
 \node[] (x1) at (.5,2.5) {$6$};
 \node[] (x1) at (1.5,3.5) {$5$};
 \node[] (x1) at (1.5,2.5) {$8$};

  \draw (2,2) grid (3,5);
 \node[] (x1) at (2.5,4.5) {$1$};
 \node[] (x1) at (2.5,3.5) {$7$};
 \node[] (x1) at (2.5,2.5) {$9$};
 
\end{tikzpicture}
 \end{center}
 \end{minipage}
 \begin{minipage}{.3\textwidth}
 \begin{center}
 
\begin{tikzpicture}[scale=0.5, line width=1pt]
  \draw (-1,0) grid (0,4);
 \node[] (x1) at (-.5,3.5) {$10$};
 \node[] (x1) at (-.5,2.5) {$9$};
 \node[] (x1) at (-.5,1.5) {$2$};
 \node[] (x1) at (-.5,.5) {$1$};
 
  \draw (0,2) grid (2,4);
 \node[] (x1) at (.5,3.5) {$8$};
 \node[] (x1) at (.5,2.5) {$6$};
 \node[] (x1) at (1.5,3.5) {$7$};
 \node[] (x1) at (1.5,2.5) {$4$};

  \draw (2,2) grid (3,5);
 \node[] (x1) at (2.5,4.5) {$11$};
 \node[] (x1) at (2.5,3.5) {$5$};
 \node[] (x1) at (2.5,2.5) {$3$};
 
\end{tikzpicture}
 \end{center}
 \end{minipage}
 \begin{minipage}{.3\textwidth}
 \begin{center}
 
\begin{tikzpicture}[scale=0.5, line width=1pt]
  \draw (-1,1) grid (0,4);
 \node[] (x1) at (-.5,1.5) {$11$};
 \node[] (x1) at (-.5,2.5) {$5$};
 \node[] (x1) at (-.5,3.5) {$3$};
 
  \draw (0,2) grid (2,4);
 \node[] (x1) at (1.5,2.5) {$8$};
 \node[] (x1) at (1.5,3.5) {$6$};
 \node[] (x1) at (.5,2.5) {$7$};
 \node[] (x1) at (.5,3.5) {$4$};

  \draw (2,2) grid (3,6);
 \node[] (x1) at (2.5,2.5) {$10$};
 \node[] (x1) at (2.5,3.5) {$9$};
 \node[] (x1) at (2.5,4.5) {$2$};
 \node[] (x1) at (2.5,5.5) {$1$};
 
\end{tikzpicture}
 \end{center}
 \end{minipage}
 \caption{The left most tableau is an element of $SkYT(4,3,3)$, the middle tableau replaces each entry $i$ of the left with $11+1-i$, and then rotating gives us the tableau on the right, an element of $SkYT(3,3,4)$.}
 
 \end{figure}
 \end{center}
\end{lemma}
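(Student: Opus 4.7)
The plan is to construct an explicit involutive bijection $\Phi \colon SkYT(a,i,b) \to SkYT(b,i,a)$ given by the rule ``complement all entries, then rotate the diagram through a half-turn.'' Concretely, for $\alpha \in SkYT(a,i,b)$, set $n = a + 2i + b - 2$ (the total number of cells, which equals the maximum entry) and let $\Phi(\alpha)$ be obtained by first replacing each entry $j$ with $n+1-j$ and then rotating the result by $180^\circ$. Both operations are self-inverse and commute, so $\Phi$ will automatically be an involution, hence a bijection, provided I verify two things: that it lands in the correct skew shape, and that the result is a legal filling.

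For the shape match: in $SkYT(a,i,b)$ the left column of height $a$ hangs below the top edge of the middle band of height-$2$ columns, while the right column of height $b$ rises above the bottom edge of that band. Under a $180^\circ$ rotation the roles of left and right as well as up and down are swapped simultaneously, so the old left column of height $a$ becomes a new right column of height $a$ rising above the middle band, the old right column of height $b$ becomes a new left column of height $b$ hanging below the middle band, and the middle band of height-$2$ columns is carried to itself as a set of cells. This is exactly the shape of $SkYT(b,i,a)$.

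For legality: if $x,y$ are two cells of $\alpha$ whose relative positions force $\text{entry}(x) < \text{entry}(y)$, then $y$ lies strictly below and/or strictly to the right of $x$. After rotation, the image of $y$ lies strictly above and/or strictly to the left of the image of $x$, so in the rotated diagram legality now demands the new entry at the image of $y$ be smaller than the new entry at the image of $x$; and indeed $n+1 - \text{entry}(y) < n+1 - \text{entry}(x)$. Thus $\Phi(\alpha)$ is legal, and $\Phi$ is the desired bijection.

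The only real obstacle is the shape-matching step in the second paragraph, where one must carefully track how the two ``extended'' columns swap roles under the half-turn; the legality check is then a one-line consequence of the principle that rotation and complementation each individually reverse the strict-increase conditions, so together they preserve them.
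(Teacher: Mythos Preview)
Your proposal is correct and is essentially the same argument as the paper's: both construct the bijection by replacing each entry $j$ with $n+1-j$ and rotating the diagram $180^\circ$, observe that this is an involution, and verify legality by noting that rotation reverses the relative positions while complementation reverses the inequalities, so together they preserve the increasing conditions. Your version makes the shape-matching step slightly more explicit than the paper does, but the substance is identical.
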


In light of this,we have the following corollary to Theorem \ref{thm:comb_int}. 
\begin{cor}
\[c_{m,d}^i=c_{d-2i,m+2i}^i.\]
\end{cor}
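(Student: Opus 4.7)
The plan is to reduce the claimed numerical symmetry to the combinatorial symmetry established in Lemma \ref{lem:sym}, using Theorem \ref{thm:comb_int} as a dictionary between the two sides. First I would apply Theorem \ref{thm:comb_int} to the left-hand side to write
\[
c_{m,d}^{i} \;=\; \#SkYT(m+1,\,i,\,d-2i+1).
\]
Next, I would apply Theorem \ref{thm:comb_int} to the right-hand side, substituting $(m,d) \mapsto (d-2i,\, m+2i)$; the parameters of the tableau shape become $a = (d-2i)+1 = d-2i+1$ and $b = (m+2i) - 2i + 1 = m+1$, yielding
\[
c_{d-2i,\,m+2i}^{i} \;=\; \#SkYT(d-2i+1,\,i,\,m+1).
\]

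Once both coefficients are expressed as cardinalities of $SkYT$ sets, the desired equality
\[
\#SkYT(m+1,\,i,\,d-2i+1) \;=\; \#SkYT(d-2i+1,\,i,\,m+1)
\]
is exactly the statement of Lemma \ref{lem:sym} with $a = m+1$ and $b = d-2i+1$. Chaining these three equalities gives the corollary. There is essentially no obstacle here: the entire content of the symmetry $c_{m,d}^i = c_{d-2i,m+2i}^i$ is already encoded in the involution $\alpha \mapsto \bar\alpha$ on skew Young tableaux (rotate by $180^\circ$ and replace each entry $j$ by $n+1-j$), so the corollary is immediate once the parameter bookkeeping is checked.
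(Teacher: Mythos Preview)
Your argument is correct and matches the paper's approach exactly: the corollary is stated immediately after Lemma~\ref{lem:sym} as a direct consequence of combining that symmetry with Theorem~\ref{thm:comb_int}, which is precisely the chain of equalities you wrote out. The parameter check $(m,d)\mapsto(d-2i,m+2i)$ giving $a=d-2i+1$ and $b=m+1$ is right, and nothing more is needed.
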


\section{The Kazhdan-Lusztig Polynomials for $\rho$-Removed Uniform Matroids}\label{sec:rho_removed}
In this section, we describe a new matroid in terms of the uniform matroid, and accordingly relate the coefficients of their Kazhdan-Lusztig polynomial, which surprisingly extends our result in Theorem \ref{thm:comb_int}. First, we prove a proposition that justifies the definition of this new matroid.

\begin{prop}\label{prop:remove_disjoint_bases}
For $1<k\leq n$, let $\mathcal{B}={[n]\choose k}$, the subsets of $[n]$ of size $k$. Let $\mathcal{D}\subseteq \mathcal{B}$ be a disjoint collection of sets. Then $\mathcal{B}\setminus \mathcal{D}$ is a basis system for a matroid, that is, it satisfies the axioms for a collection of sets to be a basis for a matroid. 
\begin{proof}

Suppose there exists sets $B,B'\in \mathcal{B}\setminus \mathcal{D}$ that fail the exchange condition for bases. That is, if we let $\{b_1,b_2, \dots, b_\ell\}=B'\setminus B$, there exists a $b\in B\setminus B'$ so that for all $b_i\in B'\setminus B$ we have $B_i:=B-b+b_i\notin \mathcal{B}\setminus \mathcal{D}$. That is, $B_i\in \mathcal{D}$. However, $B-b\subseteq B_i$ for all $i$, and so provided $k>1$, the fact that the $B_i$ are members of a disjoint family of sets implies that $B_i=B_j$ for all $i$ and $j$, which in turns implies $b_i=b_j$. Hence, $B\triangle B'=\{b,b_i\}$, and so $B'=B_i$. That is, $B'\in \mathcal{D}$, a contradiction.
\end{proof}
\end{prop}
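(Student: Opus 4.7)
The plan is to verify the basis exchange axiom directly, exploiting the very specific way $\mathcal{B}\setminus\mathcal{D}$ sits inside the uniform matroid. Since every element of $\mathcal{B}$ has cardinality $k$, equicardinality is automatic, so the only thing to check is: given $B, B' \in \mathcal{B}\setminus\mathcal{D}$ and $b \in B\setminus B'$, some swap $B - b + b'$ with $b' \in B'\setminus B$ still lies in $\mathcal{B}\setminus\mathcal{D}$. Any such swap automatically lies in $\mathcal{B}$, because $\mathcal{B}$ is already the basis system of the uniform matroid $U_{k,n}$; so the entire difficulty is to rule out the possibility that \emph{every} such swap lands in the forbidden family $\mathcal{D}$.

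I would then proceed by contradiction. Suppose for some triple $(B, B', b)$ every swap $B_i := B - b + b_i$, as $b_i$ ranges over $B'\setminus B = \{b_1,\dots,b_\ell\}$, belongs to $\mathcal{D}$. The key observation is that each $B_i$ contains the common subset $B\setminus\{b\}$, which has cardinality $k-1 \geq 1$ because $k>1$. Hence any two $B_i, B_j$ share at least one element, and the disjointness of $\mathcal{D}$ forces $B_i = B_j$, and so $b_i = b_j$, for all $i,j$. This collapses $B'\setminus B$ to a single element $b_1$, and since $|B| = |B'|$ also collapses $B\setminus B'$ to $\{b\}$; thus $B' = B - b + b_1 = B_1 \in \mathcal{D}$, contradicting $B' \in \mathcal{B}\setminus\mathcal{D}$.

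The only subtle point I anticipate is the role of the hypothesis $k>1$: it is precisely what makes $B\setminus\{b\}$ nonempty and therefore lets the disjointness condition on $\mathcal{D}$ force equality of the $B_i$. Without it, the $B_i$ could be pairwise disjoint singletons and the argument would collapse. Beyond keeping this hypothesis visible, the proof is essentially a one-line observation about how disjoint $k$-subsets must behave once they share a common $(k-1)$-subset, so I do not expect any further obstacle.
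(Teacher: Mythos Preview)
Your proof is correct and follows essentially the same line as the paper's: argue by contradiction, note that all the swapped sets $B_i$ contain the nonempty common subset $B\setminus\{b\}$, use disjointness of $\mathcal{D}$ to collapse them to a single set, and conclude $B'\in\mathcal{D}$. The only differences are cosmetic (you spell out more explicitly why $|B\setminus B'|=1$ and why the hypothesis $k>1$ is needed).
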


\begin{rem}
Proposition \ref{prop:remove_disjoint_bases} proves more than the statement says. Instead of having $\mathcal{D}$ be a disjoint collection, we can instead have it be a collection of sets such that no pair of sets have a symmetric difference of size 2. (The $B_i$ we construct will have this property, since each contains a unique elements $b_i$.) Of course, if $D$ and $D'$ are disjoint, then the size of their symmetric difference is $|D|+|D'|>2$ so long as $k>1$. We keep the proposition as written, as it is more directly applicable for what follows.
\end{rem}

Proposition \ref{prop:remove_disjoint_bases} tells us that if we start with the Uniform Matroid $U_{m,d}$, we can remove any collection of disjoint bases, and the resulting collection of bases still satisfies the axioms to be a basis system for a matroid. Observe that because we are removing disjoint bases, this new matroid we build will still have all sets of size at most $d-1$ being independent. This allows us to define the following matroid.

\begin{definition}
Let $\rho\geq 0$. The \textit{$\rho$-removed Uniform Matroid of rank $d$ on $m+d$ elements} for \[m\geq \begin{cases}1 & \rho=0,1 \\d(\rho-1)& \rho>1\end{cases}\] is the matroid $U_{m,d}(\rho)$ achieved by starting with the uniform matroid of rank $d$ on $m+d$ elements and removing $\rho$ disjoint bases.
\end{definition}

\begin{rem}\label{rem:notation}\leavevmode

\begin{enumerate}

\item We disregard the case for $m=0$ since in this case $SkYT(m+1,i,d-2i+1)$ in empty. For $\rho>1$, the only way to gaurentee we have enough bases to remove is if $m+d\geq d\rho$. 

\item There are some special cases worth discussing. Note that $U_{m,d}(0)=U_{m,d}$, and by convention, $U_{m,0}(\rho)=U_{m,0}$. 

\item Since the bases of $U_{m,d}$ are \textit{all} sets of size $d$, and we are removing a disjoint collection of sets, the choice of basis does not change the resulting matroid. Defining the notation $[d]_\ell:=\{1+\ell,2+\ell,\dots, d+\ell\}$, we take $\{[d]_0,[d]_1,\dots, [d]_{\rho-1}\}$ to be the $\rho$ bases we canonically remove to get $U_{m,d}(\rho)$.

\end{enumerate}
\end{rem}

Before stating the primary theorem for this section, we provide notation for a special subset of $SkYT(a,i,b)$. Let $\overline{SkYT}(a,i,b)$ be the subset of $SkYT(a,i,b)$ such that
\begin{enumerate}
\item 1 appears at the top of the first column.
\item The largest entries are in the bottom $a-2$ positions of the first column.
\end{enumerate}
The second condition in this definition provides a natural bijection between $\overline{SkYT}(a,i,b)$ and $\overline{SkYT}(2,i,b)$ in which we `forget' the bottom $a-2$ entries of the first column in each element of $\overline{SkYT}(a,i,b)$, or `remember' them to go the other way. See the below figure. 
\begin{center}
\begin{figure}[h]
 \begin{minipage}{.2\textwidth}
 \begin{center}
 
\begin{tikzpicture}[scale=0.5, line width=1pt]
  \draw (-1,0) grid (0,4);
 \node[] (x1) at (-.5,3.5) {$2$};
 \node[] (x1) at (-.5,2.5) {$3$};
 \node[] (x1) at (-.5,1.5) {$10$};
 \node[] (x1) at (-.5,.5) {$11$};
 
  \draw (0,2) grid (2,4);
 \node[] (x1) at (.5,3.5) {$4$};
 \node[] (x1) at (.5,2.5) {$6$};
 \node[] (x1) at (1.5,3.5) {$5$};
 \node[] (x1) at (1.5,2.5) {$8$};

  \draw (2,2) grid (3,5);
 \node[] (x1) at (2.5,4.5) {$1$};
 \node[] (x1) at (2.5,3.5) {$7$};
 \node[] (x1) at (2.5,2.5) {$9$};
 
\end{tikzpicture}
 \end{center}
 \end{minipage}
 \begin{minipage}{.2\textwidth}
 \begin{center}
 
\begin{tikzpicture}[scale=0.5, line width=1pt]
  \draw (-1,0) grid (0,4);
 \node[] (x1) at (-.5,3.5) {$1$};
 \node[] (x1) at (-.5,2.5) {$3$};
 \node[] (x1) at (-.5,1.5) {$4$};
 \node[] (x1) at (-.5,.5) {$7$};
 
  \draw (0,2) grid (2,4);
 \node[] (x1) at (.5,3.5) {$2$};
 \node[] (x1) at (.5,2.5) {$8$};
 \node[] (x1) at (1.5,3.5) {$5$};
 \node[] (x1) at (1.5,2.5) {$10$};

  \draw (2,2) grid (3,5);
 \node[] (x1) at (2.5,4.5) {$6$};
 \node[] (x1) at (2.5,3.5) {$9$};
 \node[] (x1) at (2.5,2.5) {$11$};
 
\end{tikzpicture}
 \end{center}
 \end{minipage}
 \begin{minipage}{.2\textwidth}
 \begin{center}
 
\begin{tikzpicture}[scale=0.5, line width=1pt]
  \draw (-1,0) grid (0,4);
 \node[] (x1) at (-.5,3.5) {$1$};
 \node[] (x1) at (-.5,2.5) {$3$};
 \node[] (x1) at (-.5,1.5) {$10$};
 \node[] (x1) at (-.5,.5) {$11$};
 
  \draw (0,2) grid (2,4);
 \node[] (x1) at (.5,3.5) {$4$};
 \node[] (x1) at (.5,2.5) {$6$};
 \node[] (x1) at (1.5,3.5) {$5$};
 \node[] (x1) at (1.5,2.5) {$8$};

  \draw (2,2) grid (3,5);
 \node[] (x1) at (2.5,4.5) {$2$};
 \node[] (x1) at (2.5,3.5) {$7$};
 \node[] (x1) at (2.5,2.5) {$9$};
 
\end{tikzpicture}
 \end{center}
 \end{minipage}
 \begin{minipage}{.2\textwidth}
 \begin{center}
 
\begin{tikzpicture}[scale=0.5, line width=1pt]
  \draw (-1,2) grid (0,4);
 \node[] (x1) at (-.5,3.5) {$1$};
 \node[] (x1) at (-.5,2.5) {$3$};
 
  \draw (0,2) grid (2,4);
 \node[] (x1) at (.5,3.5) {$4$};
 \node[] (x1) at (.5,2.5) {$6$};
 \node[] (x1) at (1.5,3.5) {$5$};
 \node[] (x1) at (1.5,2.5) {$8$};

  \draw (2,2) grid (3,5);
 \node[] (x1) at (2.5,4.5) {$2$};
 \node[] (x1) at (2.5,3.5) {$7$};
 \node[] (x1) at (2.5,2.5) {$9$};
 
\end{tikzpicture}
 \end{center}
 \end{minipage}
 \caption{The two left most tableaux are elements of $SkYT(4,3,3)$, but not $\overline{SkYT}(4,3,3)$, while the third tableau from the left is, whose corresponding member of $\overline{SkYT}(2,3,3)$ is shown on the right.}\label{fig:tableauxinclusion}
 
 \end{figure}
 \end{center}
Because of the irrelevance of the first parameter, we sometimes omit it from the notation writing only $\overline{SkYT}(i,b)$. As a convention, we take $\overline{SkYT}(0,b)=0$.

 The goal for this section is to prove the following Theorem.
\begin{theorem}\label{thm:once_removed}
Let ${c^i_{m,d}}(\rho)$ be the $i$th coefficient for the Kazhdan-Lusztig polynomial for $U_{m,d}(\rho)$. Then 
\[{c^i_{m,d}}(\rho)=\# SkYT(m+1,i,d-2i+1)-\rho \# \overline{SkYT}(i,d-2i+1).\]
\end{theorem}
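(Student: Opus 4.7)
The plan is to apply the defining recursion of the Kazhdan--Lusztig polynomial, $t^{\rk M} P_M(t^{-1}) = \sum_{F} \chi_{M^F}(t) P_{M_F}(t)$, to $M = U_{m,d}(\rho)$ and verify that the formula of Theorem \ref{thm:once_removed} satisfies the three characterizing properties. I would use strong induction on $d$, handling all $\rho \geq 0$ simultaneously; this also yields a second, self-contained proof of the $\rho=0$ case (Theorem \ref{thm:comb_int}). The base case $d \leq 1$ is immediate since $P_{U_{m,d}(\rho)} = 1$ matches the formula.

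The first step is to enumerate the flats of $U_{m,d}(\rho)$. Because $\mathcal{D} = \{[d]_0, \ldots, [d]_{\rho-1}\}$ is pairwise disjoint, the proper flats split into (i) every $k$-subset of $[m+d]$ for $0 \leq k \leq d-2$; (ii) every $(d-1)$-subset not contained in any $[d]_j$ (``Type I''); and (iii) each removed basis, now a $d$-element flat of rank $d-1$ (``Type II''). For a $k$-subset $F$ with $k \leq d-2$, $M^F$ is the Boolean matroid of rank $k$ and $M_F \cong U_{m, d-k}(\rho_F)$, where $\rho_F = 1$ exactly when $F$ is contained in a removed basis (there are $\rho\binom{d}{k}$ such $F$, by disjointness). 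For a Type I flat $F$, $M_F \cong U_{m,1}$; for a Type II flat $D$, $M^D \cong U_{1, d-1}$ and $M_D \cong U_{m-1, 1}$, both rank-$1$ contractions having trivial KL polynomial.

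Substituting into the recursion and subtracting the analogous expansion for the uniform matroid $U_{m,d}$ yields a recursion for the difference $Q_\rho^{(m,d)}(t) := P_{U_{m,d}(\rho)}(t) - P_{U_{m,d}}(t)$. Every extra term is proportional to $\rho$, so $Q_\rho^{(m,d)} = \rho\, Q_1^{(m,d)}$; moreover, the degree bound $\deg P < d/2$ combined with the palindromic character of the left-hand side forces $Q_1^{(m,d)}$ to be independent of $m$ (the two sides of the recursion live in disjoint degree ranges). The theorem then reduces to verifying the single polynomial identity that $R^d(t) := -\sum_i \#\overline{SkYT}(i, d-2i+1)\, t^i$ satisfies
$t^d R^d(t^{-1}) = R^d(t) + \sum_{k=1}^{d-2} \binom{d}{k}(t-1)^k R^{d-k}(t) - d(t-1)^{d-1} + \chi_{U_{1,d-1}}(t) + (-1)^d(t-1)$,
together with the degree constraint $\deg R^d < d/2$.

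The main obstacle is this final polynomial identity. My approach, in the spirit of Section \ref{sec:pot1}, is to introduce a generating function marking $d$, express $\#\overline{SkYT}(i, b)$ in closed form using Lemma \ref{lem:syt_count} together with the natural bijection $\overline{SkYT}(a,i,b) \leftrightarrow \overline{SkYT}(2,i,b)$, and recognize both sides of the recursion as the same generating-function expression after simplification. The symmetry of Lemma \ref{lem:sym} together with the palindromic character on the left-hand side should keep the bookkeeping manageable.
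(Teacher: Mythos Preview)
Your overall architecture---apply the defining recursion, enumerate the flats of $U_{m,d}(\rho)$ via Proposition~\ref{prop:rest_local_orum}, and induct on $d$---is the same as the paper's. The one genuinely new move is the subtraction: you form $Q_\rho^{(m,d)}=P_{U_{m,d}(\rho)}-P_{U_{m,d}}$ and observe that, after cancelling the flats not contained in any $[d]_\ell$, every surviving term carries a factor of $\rho$, so uniqueness of the KL polynomial gives $Q_\rho=\rho\,Q_1$ and (by induction) $m$-independence of $Q_1$. This is a clean reduction that the paper does not make; the paper instead keeps both the $\#SkYT$ and $\#\overline{SkYT}$ pieces together and verifies two separate coefficient identities, Lemma~\ref{lem:SkYT_Identity} and Lemma~\ref{lem:SkYTbar_Identity}, each proved via the hook-length formula and the integral trick of Proposition~\ref{prop:integral_identity}.

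There is, however, an internal inconsistency in your proposal. You assert that the theorem ``reduces to verifying the single polynomial identity'' for $R^d(t)$, and you also claim the argument is ``self-contained'' and constitutes a second proof of the $\rho=0$ case. These cannot both be true. Your subtraction kills the $m$-dependent $\#SkYT$ terms precisely by cancelling them against the \emph{actual} recursion for $P_{U_{m,d}}$; what remains determines only the difference $Q_\rho$. To conclude the stated formula for $c^i_{m,d}(\rho)$ you still need to know that $P_{U_{m,d}}$ agrees with $\sum_i \#SkYT(m{+}1,i,d{-}2i{+}1)\,t^i$. Either you invoke Theorem~\ref{thm:comb_int} (in which case your proof is not self-contained for $\rho=0$), or you must also verify the $\rho=0$ recursion inside the induction---which is exactly the content of Lemma~\ref{lem:SkYT_Identity} and is a second, $m$-dependent identity, not subsumed by your $R^d$ equation. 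The paper takes the latter route, proving both Lemma~\ref{lem:SkYT_Identity} and Lemma~\ref{lem:SkYTbar_Identity}; your $R^d$ identity corresponds to Lemma~\ref{lem:SkYTbar_Identity} alone. The sketch of the final step (closed form for $\#\overline{SkYT}(i,b)$ via Lemma~\ref{lem:syt_count} and a generating-function check) is plausible but not yet carried out; the paper's explicit hook-length computation in Lemma~\ref{lem:SkYTbar_Identity} shows this step has nontrivial content, including a separate treatment of $i=1$.
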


When $\rho$ equals $0$ or $1$, the positivity of the coefficients is immediate. However, this positivity is always true, and we can in fact identify the coefficients by counting some tableau-like object. We start by proving the following Lemma.
\begin{lemma}\label{lem:inclusion}
Let $m\geq 1$, and $A:= \{0,1,\dots, m-1\}$. Then there exists an inclusion \[\iota: A\times SkYT(2,i,b)\hookrightarrow SkYT(m+1,i,b).\]
\end{lemma}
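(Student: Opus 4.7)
The plan is to construct $\iota$ by a ``shift and insert'' recipe parameterized by $j$: given $T \in SkYT(2,i,b)$, I will select one of $m$ distinguished size-$(m-1)$ sets of new entries, place them as the bottom $m-1$ cells of the extended left column of the target shape, and monotonically relabel the existing entries of $T$ around the inserted ones. The parameter $j$ records which set is used.

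To make this concrete, let $v := T[2,1]$ be the bottom entry of $T$'s left column and $N := m+2i+b-1$ be the number of cells of $SkYT(m+1,i,b)$. For each $j \in \{0,1,\dots,m-1\}$, define
\[ U_j := \{v+1,\, v+2,\, \dots,\, v+m\} \setminus \{v+j+1\}, \]
a subset of $\{v+1,\dots,v+m\}$ of size $m-1$. Let $\phi_j : \{1,\dots,2i+b\} \to \{1,\dots,N\} \setminus U_j$ denote the unique order-preserving bijection. Then $\iota(j,T)$ is the filling of the $SkYT(m+1,i,b)$ shape whose entry at each cell $c$ of the $SkYT(2,i,b)$ subshape is $\phi_j(T[c])$ and whose entries at the extra cells $(3,1),\dots,(m+1,1)$ are the elements of $U_j$ listed in increasing order.

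The hard part, and really the only structural step, is the bound $v \le i+2$. Since the entry $v$ at $(2,1)$ is strictly less than each of the $i$ entries in row $2$ to its right and each of the $b-2$ entries of the right column below row $2$, at least $i+b-2$ of the $2i+b$ entries of $T$ must exceed $v$; hence $v \le (2i+b)-(i+b-2)=i+2$. Combined with $i+b \ge 3$ (the only regime in which $\#SkYT(2,i,b)\ne 0$, namely $i\ge 1$ and $b\ge 2$), this yields $v+m \le N$, so $U_j \subseteq \{1,\dots,N\}$.

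Given this bound, well-definedness of $\iota(j,T)$ is quick: $\phi_j$ preserves all row and column monotonicity on the $SkYT(2,i,b)$ subshape; the newly inserted values are increasing by construction; and the transition $S[2,1]<S[3,1]$ holds because $U_j \subseteq \{v+1,\dots,v+m\}$ forces $\phi_j$ to fix $\{1,\dots,v\}$ pointwise, so $S[2,1]=\phi_j(v)=v<v+1\le\min U_j$. Injectivity then comes from an explicit inverse: from $S = \iota(j,T)$ one reads off $v=S[2,1]$ and $U_j=\{S[3,1],\dots,S[m+1,1]\}$, recovers $j$ as the unique index satisfying $\{v+1,\dots,v+m\}\setminus U_j=\{v+j+1\}$, and reconstructs $T$ by inverting $\phi_j$ on the $SkYT(2,i,b)$-portion of $S$.
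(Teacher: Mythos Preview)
Your construction has a genuine error in the bound $v\le i+2$. In the shape $SkYT(2,i,b)$ as defined in the paper, the right-most column of height $b$ extends \emph{above} the two middle rows, not below them; its bottom cell sits in row $2$. So there are no ``$b-2$ entries of the right column below row $2$'' to appeal to. Concretely, in $SkYT(2,1,3)$ the filling with left column $(2,4)$, right column $(1,3,5)$ is legal and has $v=4>i+2=3$.

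The good news is that the only bound you actually need is $v+m\le N=m+2i+b-1$, i.e.\ $v\le 2i+b-1$, and this follows from the part of your argument that is correct: the $i$ cells to the right of $(2,1)$ in row $2$ all carry entries larger than $v$, so $v\le (2i+b)-i=i+b\le 2i+b-1$ whenever $i\ge 1$. With that corrected bound, the rest of your construction (the order-preserving relabeling $\phi_j$, the check $S[2,1]=v<\min U_j$, and the explicit inverse) goes through unchanged.

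For comparison, the paper's proof avoids relabeling entirely. It embeds $\alpha\in SkYT(2,i,b)$ into $SkYT(m+1,i,b)$ by appending the $m-1$ largest available values $n+1,\dots,n+m-1$ (where $n=2i+b$) to the new tail, and then for each $i\in A$ swaps the bottom-right entry $n$ with the tail entry $n+i$. Injectivity is immediate because distinct $\alpha,\beta$ already differ somewhere other than the bottom-right cell, and that cell is the only one touched by the swap. Your approach buys nothing extra here and costs the relabeling machinery; once the bound is fixed it is correct, but the paper's route is shorter.
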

\begin{proof}
Let $\a\in SkYT(2,i,b)$. There is a natural way of viewing $\a$ as an element of $SkYT(m+1,i,b)$ in a way similar to what is described in Figure \ref{fig:tableauxinclusion}---attach to $\a$ a column of $m-1$ squares, placing in them the largest possible numbers (in increasing order) of the entries appearing in an element of $SkYT(m+1,i,b)$. For future reference, we refer to the $1\times (m-1)$ column as $\mu$, and refer to this described image of $\a$ as $\bar{\a}$. Note the following facts:
\begin{itemize}
\item The entry in the bottom right corner of $\a$ is the largest number in the tableau. This number is $n:=2+2(i-1)+b=2i+b$.
\item $n$ is smaller than every entry in $\mu$, and every entry in $\mu$ is larger than every element in $\a$. The elements of $\mu$ are $\{n+1,n+2,\dots, n+m-1\}$.
\end{itemize}

We define an action on the locations of the numbers in $\bar{\a}$ by the elements of $A$, which we denote $i\cdot \bar{\a}$ for $i\in A$. The element $i\cdot \bar{\a}\in SkYT(m+1,i,b)$ is defined by starting with $\bar{\a}$, removing $n+i$ from $\mu$ and placing it where $n$ is, shifting all entries of $\mu$ down, and then placing $n$ at the top of $\mu$. The action is well-defined by the itemized facts above.

Hence, we may define the map $\iota: (i,\a)\mapsto i\cdot \bar{\a}$. To see why this map is an inclusion, simply note that any two distinct $\a,\b\in SkYT(2,i,b)$ \textit{must} disagree in a location other than the bottom right corner, as both are required to have $n$ there. This position will never change value by $\iota$. Then it is immediate that $\iota$ sends $(i,\a)$ and $(j,\b)$ to different elements since the outputs of both will still disagree in the position that $\a$ and $\b$ did. 
\end{proof}

This proves much more than we need. However, the tools used in the proof will be beneficial in showing the given subtraction in Theorem \ref{thm:once_removed} is positive. First, define a distinguished subset of $SkYT(m+1,i,d-2i+1)$, which we denote $SkYT_\rho(m+1,i,d-2i+1)$. Every $\a\in SkYT_\rho(m+1,i,d-2i+1)$ must satisfy at least one of the following conditions.
\begin{itemize}
\item the top entry of the right-most column of $\alpha$ is 1; or

\item the bottom entry of the right-most column  is greater than $d+\rho$; or

\item the third entry (from the top) of the left-most column  is less than $d+1$.
\end{itemize}
We then have the following proposition.

\begin{prop}
\[\#SkYT_\rho(m+1,i,d-2i+1)=\#SkYT(m+1,i,d-2i+1)-\rho\#\overline{SkYT}(m+1,i,d-2i+1).\]
\end{prop}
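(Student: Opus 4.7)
Set $a=m+1$, $b=d-2i+1$, and $n=a+2i+b-2$; the plan is to produce a bijection between the complement $SkYT(a,i,b)\setminus SkYT_\rho(a,i,b)$ and the product $\{1,\dots,\rho\}\times\overline{SkYT}(a,i,b)$, from which the proposition follows by counting. An element $\alpha$ lies in the complement exactly when the top entry of its rightmost column is strictly greater than $1$, the bottom entry of its rightmost column is at most $d+\rho$, and (when it exists) the third-from-top entry of its leftmost column is at least $d+1$.

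First I would show that these three constraints rigidly determine most of $\alpha$. Since $1$ must sit at the top of some column of $\alpha$, and since the row inequality between the top of the leftmost column and the top-left of the middle block forbids $1$ from lying in the middle block, the ``top of right column $\neq 1$'' condition places $1$ at the top of the leftmost column. Combined with strict column monotonicity, the third-from-top condition forces the bottom $a-2$ cells of the leftmost column to form an $(a-2)$-subset of the $(a-1)$-element set $\{d+1,\dots,n\}$; let $z$ be the unique value of $\{d+1,\dots,n\}$ that is absent from the leftmost column. Then $z$ lives in the middle-plus-right region, and by row/column monotonicity along the bottom row and the rightmost column the bottom-right cell strictly dominates every other entry of that region. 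Combined with ``bottom-right $\leq d+\rho$'' and $z>d$, this forces $z$ to coincide with the bottom-right entry, so the bottom-right entry equals $d+k$ for a unique $k\in\{1,\dots,\rho\}$, and the bottom $a-2$ entries of the leftmost column are determined to be the increasing listing of $\{d+1,\dots,n\}\setminus\{d+k\}$.

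Once $k$ is fixed, the remaining cells of $\alpha$ form exactly the shape $SkYT(2,i,b)$ and carry the entries $\{1,\dots,d\}\cup\{d+k\}$, with $1$ at the top-left and $d+k$ at the bottom-right. The relabelling $d+k\mapsto d+1$ changes only the value at the bottom-right cell and preserves every row and column inequality, since every other entry of this sub-tableau is $\leq d<d+1$; the resulting tableau belongs to $\overline{SkYT}(2,i,b)$, which is in bijection with $\overline{SkYT}(a,i,b)$ by the remark in Section~\ref{sec:rho_removed}. The inverse map takes $(k,\beta)\in\{1,\dots,\rho\}\times\overline{SkYT}(2,i,b)$, replaces $\beta$'s bottom-right value $d+1$ by $d+k$, and appends the set $\{d+1,\dots,n\}\setminus\{d+k\}$ in increasing order as the bottom $a-2$ cells of the leftmost column; monotonicity holds because the second-from-top leftmost entry inherited from $\beta$ is $\leq d$ while the first appended entry is $\geq d+1$. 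The main (modest) obstacle is pinning down $z$ as the bottom-right entry: this is what couples the three complement conditions and reduces the free parameter to exactly $\rho$ values. Every other step is a direct verification that the constructions respect the skew-tableau inequalities.
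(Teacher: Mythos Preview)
Your argument is correct and is essentially the same bijection the paper uses: the paper packages the map $(\text{index},\beta)\mapsto\alpha$ as the action $i\cdot\bar\alpha$ from Lemma~\ref{lem:inclusion} and then observes that the image $\mathcal S$ is precisely the complement of $SkYT_\rho$, while you construct the same bijection by hand starting from the complement side. One small point worth tightening: when you define $z$ as the unique element of $\{d+1,\dots,n\}$ absent from the leftmost column, you are implicitly using that the \emph{second} cell of the leftmost column is at most $d$; this is true (the row inequality forces it to be smaller than the bottom-right entry, which you later show equals $z\le d+\rho$, and more directly it must be smaller than every entry to its right in the top row, all of which lie in $\{2,\dots,d\}\cup\{z\}$), but it deserves a sentence so that $z$ is visibly well-defined before you locate it at the bottom-right.
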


\begin{proof}
Let $\a\in \overline{SkYT}(m+1,i,d-2i+1)$. Hence, in particular, 1 is at the top of the left column and the largest possible elements are in the left tail (by tail, we mean the the entries starting at the third entry from the top). Let $i\in \{0,1,\dots, \rho-1\}$. Utilizing the notation from Lemma \ref{lem:inclusion}, we have that $i\cdot \a$ has a 1 in the top left position, has $d+1+i$ at the bottom of the right column, and the elements of $\{d+1,d+2,\dots, m+d\}\setminus \{d+1+i\}$ in the left tail. 

Let $\mathcal{S}=\{i\cdot \a: \a\in \overline{SkYT}(m+1,i,d-2i+1), i \in \{0,1,\dots, \rho-1\}\}$. Our work from Lemma \ref{lem:inclusion} gives \[\#\mathcal{S}=\rho\#\overline{SkYT}(m+1,i,d-2i+1).\]
Hence, $\#SkYT(m+1,i,d-2i+1)-\rho\#\overline{SkYT}(m+1,i,d-2i+1)$ counts the number of elements in $SkYT(m+1,i,d-2i+1)\setminus\mathcal{S}$. Such elements are exactly described by the elements in $SkYT_\rho(m+1,i,d-2i+1)$.
\end{proof}

It is now equivalent to state Theorem \ref{thm:once_removed} as our primary result.\\

\noindent \textbf{Theorem 1.} \textit{
Let ${c^i_{m,d}}(\rho)$ be the $i$th coefficient for the Kazhdan-Lusztig polynomial for the matroid $U_{m,d}(\rho)$. Then
\[{c^i_{m,d}}(\rho)=\# SkYT_\rho(m+1,i,d-2i+1).\]}
Summarized, we really have the following corollary to Theorem \ref{thm:once_removed}.
\begin{cor}
\[0\leq {c^i_{m,d}}(\rho)\leq c^i_{m,d}(\rho-1)\leq \dots\leq c^i_{m,d}(1)\leq c^i_{m,d}.\]
\end{cor}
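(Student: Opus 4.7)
The plan is to deduce the corollary as an essentially immediate consequence of Theorem \ref{thm:once_removed} together with the proposition that precedes it. No new combinatorial work is needed; the content of the corollary is just the observation that both endpoints of the chain admit a clean set-theoretic interpretation.

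First I would handle the leftmost inequality $0\leq c^i_{m,d}(\rho)$. By Theorem \ref{thm:once_removed}, we have the identification $c^i_{m,d}(\rho)=\#SkYT_\rho(m+1,i,d-2i+1)$, and a cardinality of a set is manifestly nonnegative. (One should verify the parameter constraints from the definition of $U_{m,d}(\rho)$—namely $m\geq d(\rho-1)$ for $\rho>1$—are in force throughout the chain, since for the chain to make sense one needs each $U_{m,d}(\rho')$ to be defined for $\rho'=0,1,\ldots,\rho$; this is automatic because $m\geq d(\rho-1)\geq d(\rho'-1)$ for every such $\rho'$.)

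Next I would establish the monotonicity portion of the chain by using the proposition to compute consecutive differences explicitly. The proposition supplies
\[c^i_{m,d}(\rho)=\#SkYT(m+1,i,d-2i+1)-\rho\cdot\#\overline{SkYT}(m+1,i,d-2i+1),\]
which is a linear (actually affine) function of $\rho$ with slope $-\#\overline{SkYT}(m+1,i,d-2i+1)$. Subtracting consecutive terms gives
\[c^i_{m,d}(\rho-1)-c^i_{m,d}(\rho)=\#\overline{SkYT}(m+1,i,d-2i+1)\geq 0,\]
so the sequence $\rho\mapsto c^i_{m,d}(\rho)$ is weakly decreasing. Specializing this to $\rho=1,2,\ldots$ produces every inequality in the asserted chain, and using $c^i_{m,d}(0)=c^i_{m,d}$ (from Theorem \ref{thm:comb_int}, or equivalently from the $\rho=0$ case of Theorem \ref{thm:once_removed}, for which $SkYT_0=SkYT$) pins down the rightmost inequality.

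I do not anticipate any real obstacle here: the entire corollary reduces to the two observations that a set has nonnegative cardinality and that the formula in the proposition is affine in $\rho$ with nonpositive slope. If anything, the only point that deserves a line of care is the bookkeeping check that each matroid $U_{m,d}(\rho')$ in the chain is well-defined under the standing hypothesis on $m,d,\rho$, which amounts to the trivial monotonicity $d(\rho'-1)\leq d(\rho-1)\leq m$ for $\rho'\leq\rho$.
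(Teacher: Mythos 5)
Your proposal is correct and matches the paper's (implicit) reasoning exactly: the paper states this corollary as an immediate consequence of Theorem \ref{thm:once_removed}, with nonnegativity coming from the identification $c^i_{m,d}(\rho)=\#SkYT_\rho(m+1,i,d-2i+1)$ and monotonicity from the fact that the formula is affine in $\rho$ with slope $-\#\overline{SkYT}(i,d-2i+1)\leq 0$. Your extra check that each $U_{m,d}(\rho')$ in the chain is well-defined is a sensible bit of bookkeeping the paper leaves unstated.
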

This corollary suggests that having less bases may lead to weakly smaller coefficients.

The proof of Theorem \ref{thm:once_removed} will be proved by using the definition of Kazhdan-Lusztig polynomials directly. This means having an understanding of the flats, localizations, contractions, and the characteristic polynomials for the localizations and contractions of $U_{m,d}(\rho)$ is necessary. We provide these first, along with other important identities, in the subsections that follow. The final subsection will provide the proof for Theorem \ref{thm:once_removed}, and hence prove Theorem \ref{thm:main}.

\subsection{Flats, Contractions, Localizations, and Characteristic Polynomials}\leavevmode

Throughout, let $F$ be a flat. For a matroid $M$, recall that $M^F$ (respectively, $M_F$) denotes the localization (respectively, contraction) of $M$ at $F$. By $M^F$, we mean the matroid with ground set $F$, whose independent sets are those subsets of $F$ that are also independent in $M$. By $M_F$, we mean the matroid with ground set $M\setminus F$, whose independent sets are those subsets whose union with a basis for $F$ is independent in $M$. 

When $M=U_{m,d}$, the localizations and contractions are well understood:
\[(U_{m,d})^F=\begin{cases} U_{m,d} & F=[m+d]\\ U_{0,|F|} & F\neq [m+d] \end{cases},\]
and 
\[(U_{m,d})_F=\begin{cases} U_{0,0} & F=[m+d]\\ U_{m,d-|F|} & F\neq [m+d] \end{cases}.\]

To see this, recall that the flats of $U_{m,d}$ consists of the set $[m+d]$ along with all subsets of $[m+d]$ of size at most $d-1$. Also, note that localizations treat $F$ as the ground set and contractions treat $F$ as the rank 0 element (sometimes referred to as $\hat{\textbf{0}}$) since, in this case, every flat is also independent (that is, a basis for itself).

The corresponding equations for $U_{m,d}(\rho)$ can also be described in a similar manner, though require a bit more work to see. The flats for this matroid are almost the same as the flats for the uniform matroid. Utilizing our notation from Remark \ref{rem:notation}, recall that $[d]_\ell$ is not longer independent. This means all of its subsets of size $d-1$ are no longer flat. We also get that $[d]_\ell$ is now flat of rank $d-1$. (Though $[d]_\ell$ is no longer independent, all subsets of it is.)

\begin{prop}\label{prop:rest_local_orum}
\[(U_{m,d}(\rho))^F=\begin{cases} U_{m,d}(\rho) & F=[m+d]\\ U_{1,d-1} & F=[d]_\ell, \text{ for some $\ell$} \\ U_{0,|F|} & \textit{otherwise}  \end{cases}\]
and 
\[(U_{m,d}(\rho))_F=\begin{cases} U_{m,d}(\rho) & F=\emptyset\\ U_{m,d-|F|}(1) & \emptyset\nsubseteq F\subsetneq [d]_\ell, \text{ for some $\ell$}\\ U_{m-1,1} & F=[d]_\ell, \text{ for some $\ell$} \\ (U_{m,d})_F & \textit{otherwise.}  \end{cases}\]

\end{prop}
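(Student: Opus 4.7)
The plan is to first classify the flats of $U_{m,d}(\rho)$ and then, for each type, directly apply the definitions of localization and contraction to read off the resulting matroid. The structural fact driving everything is that $U_{m,d}(\rho)$ differs from $U_{m,d}$ in two concrete ways: each $[d]_\ell$ is now a dependent $d$-set (so the $(d-1)$-subsets sitting inside $[d]_\ell$ lose their flat status, since they can be enlarged inside $[d]_\ell$ without raising the rank), and every $[d]_\ell$ itself becomes a flat of rank $d-1$. Everything else is inherited from $U_{m,d}$, because the disjointness of the removed bases prevents any smaller circuits from being created.

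The first step is a flat-classification lemma: the flats of $U_{m,d}(\rho)$ are exactly (i) all subsets of size at most $d-2$, (ii) the $(d-1)$-subsets not contained in any $[d]_\ell$, (iii) the sets $[d]_\ell$ themselves (each of rank $d-1$), and (iv) the ground set $[m+d]$. To rule out other candidates, note that every set of size at most $d-1$ is independent, so a $(d-1)$-set $F$ fails to be a flat only if some $e \notin F$ satisfies $F \cup \{e\} = [d]_\ell$; meanwhile, any set of size $\geq d$ with rank $d-1$ must equal some $[d]_\ell$ (this uses disjointness: a set of size $d+1$ cannot contain two disjoint $d$-subsets that are both removed bases), and any set of rank $d$ must equal $[m+d]$ to be a flat.

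For the localization $(U_{m,d}(\rho))^F$, whose ground set is $F$ with independence inherited from $M$, the three cases split cleanly: $F=[m+d]$ recovers $M$; $F=[d]_\ell$ gives a ground set of size $d$ whose only dependent subset is $F$ itself, i.e., $U_{1,d-1}$; and every other flat has $|F|\leq d-1$ and is independent, yielding the free matroid $U_{0,|F|}$. For the contraction $(U_{m,d}(\rho))_F$, I would take a basis $B$ of $F$ and check when $I\cup B$ is independent in $M$. When $F=\emptyset$ the contraction is $M$. When $F=[d]_\ell$, the new rank is $1$ and any singleton outside $[d]_\ell$ completes a $(d-1)$-basis of $[d]_\ell$ to a non-removed $d$-set, giving $U_{m-1,1}$. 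When $\emptyset\subsetneq F\subsetneq [d]_\ell$ for a (necessarily unique) $\ell$, the unique $I\subseteq[m+d]\setminus F$ of size $d-|F|$ with $I\cup F$ a removed basis is $I=[d]_\ell\setminus F$, so exactly one basis is deleted from the would-be $U_{m,d-|F|}$, producing $U_{m,d-|F|}(1)$. In the remaining \emph{otherwise} case, $F$ is not contained in any $[d]_\ell$, so $I\cup F$ can never equal some $[d]_{\ell'}$ and no basis is removed, giving $(U_{m,d})_F$.

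The main obstacle is the careful bookkeeping in the contraction analysis, in particular verifying that when $F\subsetneq[d]_\ell$ the only "removal" inherited by the minor is the single set $[d]_\ell\setminus F$. The disjointness hypothesis on the removed bases is what forces uniqueness of $\ell$ and rules out additional removals coming from some $[d]_{\ell'}$ with $\ell'\neq\ell$ (since $F\subseteq[d]_\ell\cap[d]_{\ell'}=\emptyset$ would force $F=\emptyset$). Once the flat classification is established and this intersection analysis is handled, the remaining verifications are direct rank computations.
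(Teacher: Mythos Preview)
Your proposal is correct and follows essentially the same approach as the paper: direct verification from the definitions of localization and contraction, with the disjointness of the removed bases doing the work in the key cases. Your version is somewhat more thorough---you make the flat classification explicit up front and spell out the uniqueness-of-$\ell$ argument, whereas the paper handles these points more tersely inline---but the underlying reasoning is the same.
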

Note that when $\rho=0$ these formulas still apply to $U_{m,d}(0)=U_{m,d}$, as in this case there are no $[d]_\ell$ removed from $U_{m,d}$.
\begin{proof}
For the localization, the only new case necessary to mention in comparison to the uniform case is for $F=[d]_\ell$; the other cases follow from the uniform case. By the disjointness of the bases we remove, it suffices to prove the case for $F=[d]_0=[d]$. The localization of this matroid at $[d]$ treats $[d]$ as the ground set, with independent sets being those that are independent in $U_{m,d}(\rho)$. We know every \textit{proper} subset of it is independent by definition of $U_{m,d}(\rho)$, giving $ U_{1,d-1}$.

Now for the contraction. The removal of $[d]_\ell$ does not effect which sets are independent in $(U_{m,d}(\rho))_F$ when $F\nsubseteq [d]_\ell$ for any $\ell$. For the case where $F=[d]$, we want the subsets of $S:=\{d+1,d+2,\dots, d+m\}$ such that their union with a basis for $[d]$ is independent in $U_{m,d}(\rho)$. The bases for $[d]$ are the elements of ${[d]\choose d-1}$, and since bases in $U_{m,d}(\rho)$ not disjoint with $[d]$ are in $U_{m,d}(\rho)$, this means the desired subsets of $S$ are the empty set and every singleton of $S$. This gives a matroid isomorphic to $U_{m-1,1}$. We get the same result for $F=[d]_\ell$ for any $\ell$. Finally, when $F\subsetneq [d]$, note that $F$ is independent, and hence a basis for itself. Thus, the independent sets for $(U_{m,d}(\rho))_F$ are the subsets $X$ of $T:=[m+d]\setminus F$ so that $X\cup F$ is independent in $U_{m,d}(\rho)$. That is, $|X|\leq d-|F|$. When $|X|<d-|F|$, $|X\cup F|<d$ and every subset of $[m+d]$ of size smaller than $d$ is independent. When $|X|=d-|F|$, $X\cup F$ is a basis for $U_{m,d}(\rho)$ if and only if $X\cup F\neq [d]$. Note that if $\ell\geq 1$, $F\cap [d]_\ell\subseteq [d]\cap [d]_\ell=\emptyset$. That is, we get a matroid isomorphic to $U_{m,d-|F|}(1)$. An equivalent argument works for when $F\subsetneq [d]_\ell$ for any $\ell$.
\end{proof}

With these in mind, we can now compute the characteristic equation for all localizations and restructions for $U_{m,d}(\rho)$. However, by Proposition \ref{prop:rest_local_orum}, we equivalently just need to find the characteristic polynomial for $U_{m,d}$ and $U_{m,d}(\rho)$. 

First, recall that for a matroid $M$, the characteristic polynomial is given by 
\[\chi_M(t)=\sum_{F\in L(M)} \mu_{L(M)}(\hat{\textbf{0}},F)t^{\rk M -\rk F},\]

where $L(M)$ is the lattice of flats for matroid $M$.

The case when $M$ is uniform is well understood as $\mu_M$ is understood on the boolean lattice---if $\mathcal{B}^n$ is the boolean lattice on $n$ elements $\mu_{\mathcal{B}^n}(\hat{\textbf{0}},\hat{\textbf{1}})=(-1)^n$. This is useful because for uniform matroids, the interval $[\emptyset,F]$ is a boolean lattice for every flat $F$ except $[m+d]$, and when $F=[m+d]$, we know $\mu_{L(M)}(\hat{\textbf{0}},[m+d])$ is determined by the fact that $\chi_M(1)=0$. This gives
\[\chi_{U_{m,d}}(t)=(-1)^d{m+d-1\choose d-1}+\sum_{i=0}^{d-1} (-1)^i{m+d\choose i}t^{d -i}.\]

We can use the same information to find $\chi_{U_{m,d}(\rho)}$, keeping track of flats related to $[d]_\ell$, which are not independent as they are in the uniform case. 
\begin{prop}\label{prop:charpoly}
\[\chi_{U_{m,d}(\rho)}(t)=(-1)^d{m+d-1\choose d-1}-(-1)^d\rho+t(-1)^{d-1}\left({m+d\choose d-1}-\rho\right)+\sum_{i=0}^{d-2} (-1)^i{m+d\choose i}t^{d -i}.\]
\end{prop}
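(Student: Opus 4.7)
The plan is to evaluate
\[\chi_{U_{m,d}(\rho)}(t)=\sum_{F\in L(U_{m,d}(\rho))}\mu(\hat{\textbf{0}},F)\,t^{d-\rk F}\]
directly after enumerating the lattice of flats. Relative to $L(U_{m,d})$, this lattice changes in exactly two ways: each subset of size $d-1$ contained in some removed basis $[d]_\ell$ ceases to be a flat (its closure is now the enlarged rank-$(d-1)$ set $[d]_\ell$), and each $[d]_\ell$ itself becomes a new flat of rank $d-1$. Every other flat of $U_{m,d}$ persists, with the same rank. I would then verify no further flats appear: any flat of rank $d-1$ of size greater than $d-1$ would be a dependent $d$-set (so one of the $[d]_\ell$'s), since disjointness of the removed bases (Proposition \ref{prop:remove_disjoint_bases}) precludes any set of size $d+1$ or more whose every $d$-subset is dependent.

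Next I would compute the M\"obius values. For a surviving flat $F$ of rank $k\leq d-1$ distinct from every $[d]_\ell$, each proper subset of $F$ remains a flat of the expected rank, so $[\hat{\textbf{0}},F]$ is a Boolean lattice of rank $k$ and $\mu(\hat{\textbf{0}},F)=(-1)^k$. The one novel calculation is $\mu(\hat{\textbf{0}},[d]_\ell)$: the interval below $[d]_\ell$ consists of $[d]_\ell$ itself together with every subset of $[d]_\ell$ of size at most $d-2$ (the size-$(d-1)$ subsets having dropped out as described above). Combining $\sum_{G\leq [d]_\ell}\mu(\hat{\textbf{0}},G)=0$ with the identity $\sum_{k=0}^{d-2}(-1)^k\binom{d}{k}=(d-1)(-1)^d$, obtained by pulling the top two terms off $(1-1)^d=0$, yields $\mu(\hat{\textbf{0}},[d]_\ell)=(d-1)(-1)^{d-1}$.

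It remains to assemble the contributions by rank. The size-$k$ subset flats for $0\leq k\leq d-2$ contribute exactly $\sum_{i=0}^{d-2}(-1)^i\binom{m+d}{i}t^{d-i}$. Combining the $\binom{m+d}{d-1}-\rho d$ surviving size-$(d-1)$ subset flats with the $\rho$ new flats $[d]_\ell$ gives the $t$-coefficient
\[(-1)^{d-1}\Bigl(\binom{m+d}{d-1}-\rho d\Bigr)+\rho(d-1)(-1)^{d-1}=(-1)^{d-1}\Bigl(\binom{m+d}{d-1}-\rho\Bigr),\]
matching the claim. Finally, $\mu(\hat{\textbf{0}},[m+d])$ is pinned down by $\chi_{U_{m,d}(\rho)}(1)=0$ and simplifies, via the standard identity $\sum_{k=0}^{d-1}(-1)^k\binom{m+d}{k}=(-1)^{d-1}\binom{m+d-1}{d-1}$, to the constant term $(-1)^d\binom{m+d-1}{d-1}-(-1)^d\rho$. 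The only point requiring real care is the structural description of the interval $[\hat{\textbf{0}},[d]_\ell]$ together with ruling out unexpected rank-$(d-1)$ flats; once that is settled, the rest is routine binomial bookkeeping.
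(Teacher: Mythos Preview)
Your proposal is correct and follows essentially the same route as the paper: both identify the flats of $U_{m,d}(\rho)$, compute $\mu(\hat{\textbf{0}},[d]_\ell)$ via the M\"obius recursion on the Boolean interval below it (your value $(d-1)(-1)^{d-1}$ agrees with the paper's $(-1)^d+d(-1)^{d-1}$), assemble the linear coefficient from the surviving size-$(d-1)$ flats plus the $\rho$ new ones, and fix the constant term using $\chi(1)=0$. Your explicit exclusion of unexpected rank-$(d-1)$ flats is a nice touch that the paper handles more informally in the discussion preceding the proposition.
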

Again, note that the formula works even in the case of $\rho=0$.
\begin{proof}
For convenience, we omit subscripts for $\chi$ and $\mu$, since throughout we work in $U_{m,d}(\rho)$.
The terms of degree at least 2 follows from the uniform case since in $U_{m,d}(\rho)$, every set of size at most $d-2$ is still flat, since every flat of size $d-1$ is independent. The term of degree one comes from summing $\mu(\zb, F)$ for flats $F$ of rank $d-1$. Recall that these flats are given by $\{[d]_0,[d]_1,\dots, [d]_\ell\}$ and all elements of ${[m+d]\choose d-1}$ not contained in any $[d]_\ell$. When $F$ is one of the latter described flats, it follows from the uniform case that $\mu(\zb, F)=(-1)^{d-1}$. Otherwise, 
\begin{align*}
\mu(\zb,[d])&=-\sum_{\zb\leq F<[d]}\mu(\zb,F)\\
&=-\sum_{i=0}^{d-2}(-1)^i{m+d\choose i}\\
&=(-1)^d+d(-1)^{d-1}.
\end{align*}
Thus the coefficient linear term for $\chi$ is given by 
\begin{align*}
\rho(-1)^d+\rho d(-1)^{d-1}+(-1)^{d-1}\left({m+d\choose d-1}-\rho{d\choose d-1}\right)=(-1)^{d-1}{m+d\choose d-1}-\rho(-1)^{d-1}.
\end{align*}

For the constant term, it is equivalent to negate the sum over $\mu(\zb, F)$ for all flats $F\neq [m+d]$. This gives 
\begin{align*}
-\sum_{i=0}^{d-2} (-1)^i{m+d\choose i}-(-1)^{d-1}{m+d\choose d-1}-\rho(-1)^d&=-\sum_{i=0}^{d-1} (-1)^i{m+d\choose i}-\rho(-1)^d\\
&=(-1)^d{m+d-1\choose d-1}-\rho(-1)^d.
\end{align*}
\end{proof}

It will be helpful to restate this proposition in the following way for when we prove Theorem \ref{thm:once_removed}.
\begin{prop}\label{prop:charpoly_restated} (Proposition \ref{prop:charpoly} restated.)
\[ [t^i]\chi_{U_{m,d}(\rho)} = \begin{cases} (-1)^d{m+d-1\choose d-1}-\rho(-1)^d & i=0\\
                                           (-1)^{d-1}{m+d\choose d-1}-\rho(-1)^{d-1} & i=1\\
                                           (-1)^{d-i}{m+d\choose d-i} & 2\leq i\leq d \end{cases}\]
\end{prop}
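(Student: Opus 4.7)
The statement is a straightforward restatement of Proposition \ref{prop:charpoly}, so no new combinatorial or algebraic content is needed; the proof is purely a matter of reading off the coefficient of each $t^i$ in the closed-form polynomial already established. The plan is therefore to match terms case by case against the expression
\[\chi_{U_{m,d}(\rho)}(t)=\big((-1)^d\tbinom{m+d-1}{d-1}-(-1)^d\rho\big)+t(-1)^{d-1}\big(\tbinom{m+d}{d-1}-\rho\big)+\sum_{j=0}^{d-2}(-1)^j\tbinom{m+d}{j}t^{d-j}.\]

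First I would take the constant term, which is visibly $(-1)^d\binom{m+d-1}{d-1}-(-1)^d\rho$, giving the $i=0$ case. Next, I would observe that the middle term is the unique contribution of degree $1$ (since in the summation one has $d-j\ge 2$), so distributing $(-1)^{d-1}$ yields the $i=1$ case. Finally, for $2 \leq i \leq d$, the coefficient of $t^i$ comes solely from the sum by setting $j = d-i$; the constraint $0 \leq j \leq d-2$ is equivalent to $2 \leq i \leq d$, and the coefficient reduces to $(-1)^{d-i}\binom{m+d}{d-i}$.

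There is no real obstacle beyond checking that the three cases partition $0 \leq i \leq d$ and that the linear term from the explicit formula and the summation do not overlap in degree $1$. Once these trivial checks are made, the piecewise expression of Proposition \ref{prop:charpoly_restated} follows immediately.
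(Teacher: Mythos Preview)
Your proposal is correct and matches the paper's approach exactly: the paper offers no separate proof of Proposition~\ref{prop:charpoly_restated}, labeling it simply ``Proposition~\ref{prop:charpoly} restated,'' and your term-by-term reading of the coefficients from the closed form is precisely the intended (and only) content.
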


\subsection{Useful Identities}\leavevmode

In this section, we provide two identities---one involving $\#SkYT$, the other involving $\#\overline{SkYT}$---whose proofs will be similar. We first discuss the identity pertaining to $\#SkYT$. 
\begin{lemma}\label{lem:SkYT_Identity}
If $i\geq 1$, then
\[0=(-1)^{d-i}{m+d\choose d-i}+\sum_{j=0}^{d-1}\sum_{k=0}^i(-1)^{j-i+k}{j\choose j-i+k}{m+d \choose j}\#SkYT(m+1,k,d-j-2k+1).\]
\end{lemma}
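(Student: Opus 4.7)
The plan is to derive this identity by extracting the coefficient of $t^{i}$ from the defining Kazhdan--Lusztig recursion
\[t^{d}\,P_{U_{m,d}}(t^{-1})\;=\;\sum_{F\text{ a flat}}\chi_{(U_{m,d})^{F}}(t)\,P_{(U_{m,d})_{F}}(t)\]
applied to the uniform matroid $U_{m,d}$. Using the $\rho=0$ case of Proposition~\ref{prop:rest_local_orum}, the flats are $[m+d]$ together with the $\binom{m+d}{j}$ subsets of each size $j\le d-1$; a size-$j$ proper flat contributes $\chi_{U_{0,j}}(t)\,P_{U_{m,d-j}}(t)=(t-1)^{j}P_{U_{m,d-j}}(t)$ (using the $m=0$, $\rho=0$ case of Proposition~\ref{prop:charpoly}), and $F=[m+d]$ contributes $\chi_{U_{m,d}}(t)$. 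Thus the recursion becomes
\[t^{d}\,P_{U_{m,d}}(t^{-1})\;=\;\chi_{U_{m,d}}(t)\;+\;\sum_{j=0}^{d-1}\binom{m+d}{j}(t-1)^{j}\,P_{U_{m,d-j}}(t).\]

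Next, I would invoke Theorem~\ref{thm:comb_int} to rewrite $P_{U_{m,d-j}}(t)=\sum_{k}\#SkYT(m+1,k,d-j-2k+1)\,t^{k}$ and expand $(t-1)^{j}=\sum_{\ell=0}^{j}(-1)^{j-\ell}\binom{j}{\ell}t^{\ell}$. A direct convolution gives
\[[t^{i}]\bigl((t-1)^{j}\,P_{U_{m,d-j}}(t)\bigr)\;=\;\sum_{k=0}^{i}(-1)^{j-i+k}\binom{j}{i-k}\,\#SkYT(m+1,k,d-j-2k+1).\]
Since $\binom{j}{i-k}=\binom{j}{j-i+k}$, this is precisely the inner sum appearing in the lemma, and multiplying by $\binom{m+d}{j}$ and summing over $0\le j\le d-1$ yields exactly the double sum on the right-hand side.

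It remains to compare coefficients of $t^{i}$. On the left, $[t^{i}]\bigl(t^{d}P_{U_{m,d}}(t^{-1})\bigr)=c_{m,d}^{\,d-i}$, which vanishes for $1\le i\le \lfloor d/2\rfloor$ by the degree bound $\deg P_{U_{m,d}}<d/2$ --- this is precisely the range in which the lemma will be used in the proof of Theorem~\ref{thm:once_removed}. On the right, Proposition~\ref{prop:charpoly_restated} with $\rho=0$ gives $[t^{i}]\chi_{U_{m,d}}(t)=(-1)^{d-i}\binom{m+d}{d-i}$ for every $i\ge 1$ (the $i=1$ and $i\ge 2$ cases of Proposition~\ref{prop:charpoly_restated} coincidentally agree when $\rho=0$). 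Putting these together produces exactly the asserted identity. There is no substantive obstacle beyond the index bookkeeping; the one point worth flagging is the implicit range of $i$ needed for the left-hand side of the recursion to be zero.
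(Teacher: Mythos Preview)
Your argument is correct for $1\le i\le\lfloor d/2\rfloor$, which is indeed the only range in which Lemma~\ref{lem:SkYT_Identity} is invoked in the proof of Theorem~\ref{thm:once_removed}; you are right to flag that your derivation does not establish the identity for larger $i$ as the lemma is literally stated.

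Your route is genuinely different from the paper's. The paper proves Lemma~\ref{lem:SkYT_Identity} as a self-contained algebraic identity: it rewrites the binomials via $\binom{j}{j-i+k}\binom{m+d}{j}=\binom{m+d-i+k}{j-i+k}\binom{m+d}{i-k}$, applies the inverse relation Lemma~\ref{lem:skyt_to_sky_count} to collapse the $j$-sum into $\#SYT(m+1,k,d-k-i-1)$, evaluates that via the hook-length formula, and finishes with a generating-function/integral computation (Proposition~\ref{prop:integral_identity}). By contrast, you extract the identity directly from the Kazhdan--Lusztig recursion for $U_{m,d}$ after plugging in Theorem~\ref{thm:comb_int}. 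Your argument is much shorter and avoids the hook-length formula and the integral calculus entirely. The price is a change in logical structure: the paper intentionally proves Lemma~\ref{lem:SkYT_Identity} without appealing to Theorem~\ref{thm:comb_int}, so that the proof of Theorem~\ref{thm:once_removed} constitutes an \emph{independent} second proof of the $\rho=0$ case (as announced at the end of the Introduction). With your derivation, Lemma~\ref{lem:SkYT_Identity} depends on Theorem~\ref{thm:comb_int}, and that second proof is no longer independent. A related practical point is that the paper's method transfers essentially verbatim to the companion identity Lemma~\ref{lem:SkYTbar_Identity} for $\overline{SkYT}$, whereas your recursion-based approach does not obviously yield that lemma without already assuming the $\rho=1$ case of Theorem~\ref{thm:once_removed}.
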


In proving this Lemma, it will be first useful to have the following result, which is in a sense the dual to Lemma \ref{lem:syt_count}.
\begin{lemma}\label{lem:skyt_to_sky_count}
\[\# SYT(m+1,k,d-2k-p-1)=\sum_{j=0}^{d-2k-1}(-1)^{d-1+j}{m+d-p\choose j-p}\# SkYT(m+1,k,d-j-2k+1).\]
\end{lemma}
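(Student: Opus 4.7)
The plan is to view this identity as the algebraic inverse of Lemma \ref{lem:syt_count}, and verify it by direct substitution. Lemma \ref{lem:syt_count} expresses each $\#SkYT(a, k, b)$ as a triangular alternating sum of $\#SYT(a, k, \ell)$'s with $0 \le \ell \le b-2$, so the resulting linear system is invertible; the right-hand side of the current lemma is the conjectural explicit inverse.

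Concretely I would proceed as follows. Write $q = d-2k-p-1$ so that the left-hand side is $\#SYT(m+1, k, q)$. Substitute the expansion
\[\#SkYT(m+1, k, d-j-2k+1) = \sum_{\ell}(-1)^{\ell}{m+d-j \choose d-j-2k-\ell-1}\#SYT(m+1, k, \ell)\]
from Lemma \ref{lem:syt_count} into the right-hand side. After swapping the order of summation and changing the summation variable via $s = j - p$ (noting that ${m+d-p \choose j-p}$ vanishes for $j < p$, so the effective lower bound is $j = p$), the coefficient of $\#SYT(m+1, k, \ell)$ reduces to
\[(-1)^{d-1+p+\ell}\sum_{s\ge 0}(-1)^{s}{N\choose s}{N-s \choose q-\ell-s},\]
where $N = m+d-p$. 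Applying the standard identity ${N \choose s}{N-s \choose n-s} = {N \choose n}{n \choose s}$ factors out ${N\choose q-\ell}$, and then $\sum_{s}(-1)^{s}{q-\ell \choose s} = (1-1)^{q-\ell}$ forces the inner sum to vanish unless $\ell = q$. For $\ell = q$ a brief sign check using $p + q = d - 2k - 1$ gives $(-1)^{d-1+p+q} = (-1)^{2(d-k-1)} = 1$, leaving exactly $\#SYT(m+1, k, q)$, as required.

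The main obstacle is bookkeeping rather than substance: one must carefully juggle the several layers of signs, the $j$-dependent binomial ${m+d-j \choose \cdot}$ inside the Lemma \ref{lem:syt_count} expansion, and the summation ranges (both the automatic truncation from the outer binomial and the restriction $\ell\le d-j-2k-1$ from the inner one). Once the inversion-of-a-triangular-system structure is recognized and the change of variables $s = j-p$ is made, the computation is just a double application of standard binomial identities.
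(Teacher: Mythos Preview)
Your argument is correct. Substituting Lemma~\ref{lem:syt_count} and carrying out the change of variables $s=j-p$ exactly as you describe, one arrives at
\[
(-1)^{d-1+p+\ell}\,\binom{N}{q-\ell}\sum_{s=0}^{q-\ell}(-1)^{s}\binom{q-\ell}{s},
\]
which vanishes unless $\ell=q$, and the sign check $d-1+p+q=2(d-k-1)$ indeed gives $+1$. The bookkeeping with the ranges is also fine: for fixed $\ell$ the constraint $\ell\le d-j-2k-1$ forces $s\le q-\ell$, which is already enforced by the binomial $\binom{q-\ell}{s}$, and (in the intended regime $p\ge0$) the outer binomial $\binom{m+d-p}{j-p}$ kills the terms with $j<p$.

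This is, however, genuinely different from the paper's proof. The paper proves Lemma~\ref{lem:skyt_to_sky_count} directly by a combinatorial inclusion--exclusion, dual in spirit to the proof of Lemma~\ref{lem:syt_count}: one builds an element of $SYT(m+1,k,d-2k-p-1)$ by choosing a one-row tableau $\mu$ of length $d-2k-p-1$ and a tableau $\lambda\in SkYT(m+1,k,2)$ on the remaining labels, attaching $\mu$ to the top-right of $\lambda$, and then correcting for the incompatible attachments by alternately swapping one box at a time from $\mu$ into $\lambda$. Your approach instead recognizes the pair of lemmas as a triangular linear system and its inverse, and verifies the inversion by a double binomial identity. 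The paper's route is more self-contained and gives a bijective-flavored explanation of where each term in the sum comes from; your route is shorter, avoids introducing the auxiliary tableau $\mu$, and makes transparent that Lemma~\ref{lem:skyt_to_sky_count} carries no new combinatorial content beyond Lemma~\ref{lem:syt_count}.
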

\begin{proof}
The proof is a similar inclusion-exclusion proof to what was provided in Lemma \ref{lem:syt_count}. Starting with the term for $j=d-2k-1$, consider choosing a pair of tableau. The first tableau is a row with $d-2k-p-1$ squares, with entries selected from $[m+d]$. We call this tableau $\mu$. To get the second tableau, which we call $\lambda$ choose an element of $SkYT(m+1,k,2)$, using the numbers in $[m+d]$ not in the entries of $\mu$.

Our goal now is to attach the left block of $\mu$ (whose entry is denoted $i$) to the right of the top right block of $\lambda$ (whose entry is denoted $j$) in order to build an element of $SYT(m+1,k,d-2k-p-1)$. This only works, of course, if $j<i$. The cases where $j>i$ are in bijection with picking a pair of tableau similar to the ones selected before, but now with a row with $d-2k-p-2$ entries and an element of $SkYT(m+1,k,3)$. But here, there will be a scenario where the top right of the element of $SkYT(m+1,k,4)$ will be smaller than the left of the row with $d-2k-p-2$ entries, but these are counted with a pair of an element from $SkYT(m+1,k,3)$ and a row with $d-2k-p-3$ entries. Continuing this alternating sum gives the desired result.
\end{proof}

We also will find the following integral useful to know:
\begin{prop}\label{prop:integral_identity}
For positive integers $a$ and $b$,
\[\int_0^{-1} x^a(1+x)^b\ dx={(-1)^{a+1}b!\over (a+1)(a+2)\cdots (a+b+1)}.\]
\end{prop}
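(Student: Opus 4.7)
The plan is to reduce the given integral to the standard Beta integral over $[0,1]$, which has a well-known closed form in terms of factorials. The substitution $u = -x$ (equivalently $u = 1+x$ after a reflection) sends the interval $[0,-1]$ to $[0,1]$ with $dx = -du$, $x^a = (-1)^a u^a$, and $(1+x)^b = (1-u)^b$. The sign from $dx = -du$ combined with $(-1)^a$ produces the factor $(-1)^{a+1}$ in the statement, so the identity reduces to showing
\[\int_0^1 u^a(1-u)^b\, du = \frac{b!}{(a+1)(a+2)\cdots(a+b+1)}.\]

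This is exactly the Beta integral $B(a+1,b+1) = \frac{a!\,b!}{(a+b+1)!}$, and rewriting $\frac{a!}{(a+b+1)!} = \frac{1}{(a+1)(a+2)\cdots(a+b+1)}$ gives the desired right-hand side. For readers who prefer a self-contained derivation, the Beta identity itself can be established by induction on $b$: integration by parts with $u^a\, du$ and $(1-u)^b$ yields the recurrence $B(a+1,b+1) = \tfrac{b}{a+1}B(a+2,b)$, and the base case $b=0$ gives $\int_0^1 u^a\, du = \frac{1}{a+1}$. Iterating the recurrence telescopes into the stated product.

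There is essentially no obstacle here; the only thing worth being careful about is bookkeeping the two sources of the sign $(-1)^{a+1}$ (one from $x^a = (-1)^a u^a$ under the substitution, the other from reversing the orientation of integration), and confirming that the product $(a+1)(a+2)\cdots(a+b+1)$ contains exactly $b+1$ consecutive integers so that it equals $(a+b+1)!/a!$.
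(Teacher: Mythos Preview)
Your proof is correct and takes a slightly different route from the paper. The paper applies repeated integration by parts directly to $\int_0^{-1} x^a(1+x)^b\,dx$, differentiating $(1+x)^b$ and antidifferentiating $x^a$ until only $\int x^{a+b}\,dx$ remains; the boundary terms all vanish because each contains a factor of $x$ or $(1+x)$. Your approach instead substitutes $u=-x$ to recognize the integral immediately as $(-1)^{a+1}B(a+1,b+1)$ and then invokes the closed form for the Beta integral. This is cleaner conceptually, since it explains \emph{why} the answer is a ratio of factorials rather than discovering that fact through repeated integration by parts. Your optional self-contained verification of the Beta identity via the recurrence $B(a+1,b+1)=\frac{b}{a+1}B(a+2,b)$ is essentially the paper's argument transported to $[0,1]$, so the two proofs ultimately rest on the same integration-by-parts mechanism; the difference is that you front-load the sign bookkeeping into a single substitution, whereas the paper tracks it throughout.
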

\begin{proof}
Apply integration by parts by differentiating $(1+x)^b$ and antidifferentiating $x^a$. Ignoring limits of integration, this yields 
\[{(1+x)^bx^{a+1}\over a+1}-{b\over a+1}\int x^{a+1}(1+x)^{b-1}\ dx\]
Apply the same integration by parts again to the integral successively, until you get the final integral 
\[(-1)^b{b!\over (a+1)(a+2)\cdots(a+b)}\int x^{a+b}\ dx=(-1)^b{b!\over (a+1)(a+2)\cdots(a+b+1)}x^{a+b+1}\]

Every other term that appears before this has either a factor of $x$ or $(1+x)$, the all these terms are eliminated when we incorporate the limits of integration, while the last term becomes our desired result.
\end{proof}
\begin{proof}[Proof of Lemma \ref{lem:SkYT_Identity}]

First, we note that \[{j\choose j-i+k}{m+d \choose j}={m+d-i+k\choose j-i+k}{m+d\choose i-k}.\]

This allows us to push the summand indexed by $j$ past one of the binomial coefficients giving an inner sum of
\[\sum_{j=0}^{d-1}(-1)^j{m+d-i+k \choose j-i+k}\#SkYT(m+1,k,d-j-2k+1).\]
Note that if $k>0$, terms for $j>d-2k-1$ are zero as the last input to $SkYT$ will be less than 2, so taking $p=i-k$ in Lemma \ref{lem:skyt_to_sky_count} gives that this sum over $j$ is equal to $(-1)^{d-1}\# SYT(m+1,k,d-k-i-1)$.  So we have now reduced this double sum in the statement of Lemma \ref{lem:SkYT_Identity} to 
\[(-1)^{d-i-1}\sum_{k=0}^i (-1)^{k}{m+d\choose i-k}\# SYT(m+1,k,d-k-i-1).\]

Utilizing the hook-length formula, one can verify that 
\[{m+d\choose i-k}\# SYT(m+1,k,d-k-i-1)={(m+d)!\over(m+d-i)(m-1)!(d-i)!i!}{d-i-k\over m+k}{i\choose k}.\]

Hence, the desired result follows when we show 
\begin{align}
{m+d\choose d-i}{(m+d-i)(m-1)!(d-i)!i!\over (m+d)!}=\sum_{k=0}^i(-1)^k{i\choose k} {d-i-k\over m+k}.\label{eq:altsum_bin}
\end{align}

Consider the function
\[f(x,y)=\sum_{k=0}^i{i\choose k} x^{m+k-1}y^{d-i-k}.\]

On the one hand, taking a derivative with respect to $y$, evaluating at $y=1$, then anti-differentiating with respect to $x$ with lower integral limit 0 and upper integral limit $-1$, we recover the ride side of equation \eqref{eq:altsum_bin} with $f(x,y)$. On the other hand, we can find a closed form for $f(x,y)$ first:
\begin{align*}
f(x,y)&=\sum_{k=0}^i(-1)^k{i\choose k} x^{m+k-1}y^{d-i-k}\\
&=x^{m-1}y^{d-i}\sum_{k=0}^i{i\choose k} \left({x\over y}\right)^{k}\\
&=x^{m-1}y^{d-i}\left(1+{x\over y}\right)^i\\
\end{align*}

Using this explicit version of $f(x,y)$, define $g(x)$ by \[g(x):={d\over dy}f(x,y)\vert_{y=1}=(d-i)x^{m-1}(1+x)^i-x^mi(1+x)^{i-1}.\]

Hence, $\displaystyle\int_0^{-1}g(x)\ dx$ should yield the desired result a closed form for the right side of equation \eqref{eq:altsum_bin}. So long as $i\geq 1$, we can apply Proposition \ref{prop:integral_identity} to get
\[{(d-i)(-1)^mi! \over m(m+1)\cdots(m+i)}-{(-1)^{m+1}i! \over (m+1)(m+2)\cdots(m+i)}={(-1)^mi!(m-1)!(d-i+m)\over (m+i)!}.\]
One can verify this is the left side of equation \eqref{eq:altsum_bin}.
\end{proof}

We have an essentially equivalent identity for $\overline{SkYT}$.

\begin{lemma}\label{lem:SkYTbar_Identity} Suppose $i\geq 1$. Then
\begin{align*}
(-1)^{d-i-1}i{d\choose i+1}+\sum_{j=0}^{d-1}\sum_{k=1}^i(-1)^{j-i+k}{j\choose j-i+k}{d \choose j}\#\overline{SkYT}(k,d-j-2k+1)\\
=\begin{cases}0 & i>1\\(-1)^{d-2} & i=1\end{cases}.
\end{align*}
\end{lemma}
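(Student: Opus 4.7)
Plan: The strategy is to parallel the proof of Lemma \ref{lem:SkYT_Identity} as closely as possible, using a simple combinatorial reduction of $\#\overline{SkYT}$ to $\#SkYT$ so the same machinery applies. The key reduction is
\[\#\overline{SkYT}(k,b) \;=\; \#SkYT(2,k,b)\;-\;\#SkYT(2,k,b-1)\]
for $b \ge 2$, with both sides $0$ otherwise. To see this, note that in any legal filling of $SkYT(2,k,b)$ the entry $1$ must occupy one of exactly two exposed corners: the top of the leftmost column (which accounts for $\#\overline{SkYT}(k,b)$ by definition) or the top of the rightmost column (in which case peeling off that top cell gives a bijection with $SkYT(2,k,b-1)$).

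Substituting this identity splits the double sum in Lemma \ref{lem:SkYTbar_Identity} as $A-B$. Here $A$ is essentially the $m=1$ specialization of the double sum in Lemma \ref{lem:SkYT_Identity} but with the binomial top $\binom{d}{j}$ in place of $\binom{m+d}{j}=\binom{d+1}{j}$, while $B$ is the analogous expression with $d$ replaced by $d-1$ inside the $SkYT$ parameter (but the binomial coefficient unchanged). Applying the same binomial flip $\binom{j}{j-i+k}\binom{d}{j}=\binom{d}{i-k}\binom{d-i+k}{j-i+k}$ used in the original proof, reconciling the discrepancy between the top $d-i+k$ and the $(m+d)-(i-k)$ form required by Lemma \ref{lem:skyt_to_sky_count} via $\binom{d-i+k}{\ell}=\binom{d+1-i+k}{\ell}-\binom{d-i+k}{\ell-1}$, and collapsing each inner $j$-sum to an $\#SYT$ count plus a small boundary correction, I would then invoke the hook-length formula to express everything as explicit ratios of factorials and finish with the integral trick of Proposition \ref{prop:integral_identity}, using the same $f(x,y)=x^{m-1}y^{d-i}(1+x/y)^i$ template.

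The main obstacle is that the binomial top $\binom{d}{j}$ is decoupled from the shape parameter of the $\#SkYT$ terms in a way that $\binom{m+d}{j}$ was not in Lemma \ref{lem:SkYT_Identity}, which forces us to carry a "boundary" correction throughout the reduction. I expect this correction to be precisely what supplies the initial term $(-1)^{d-i-1}i\binom{d}{i+1}$ in the statement, and the degenerate $i=1$ case should arise because one of the integrals encountered becomes singular (with $m$ effectively $0$), producing a residual $(-1)^{d-2}$ that is absent for $i>1$. Once this bookkeeping is handled, matching closed forms is a routine computation directly mirroring the closing integral evaluation in the proof of Lemma \ref{lem:SkYT_Identity}.
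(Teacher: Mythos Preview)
Your reduction $\#\overline{SkYT}(k,b)=\#SkYT(2,k,b)-\#SkYT(2,k,b-1)$ is correct and is a clean observation. From there your plan is genuinely different from the paper's route. The paper does \emph{not} reduce $\overline{SkYT}$ to $SkYT$; instead it proves a direct analogue of Lemma~\ref{lem:skyt_to_sky_count} for $\overline{SkYT}$ (Lemma~\ref{lem:barskyt_to_sky_count}), applies the same binomial flip to collapse the inner $j$-sum to $(-1)^{d-1}\#SYT(2,k,d-k-i-1)$ in one step, and then runs the integral trick with a \emph{three}-variable template $f(x,y,z)=x^{d-i+1}y^{d-i}(1+xz/y)^i$, differentiating in both $x$ and $y$ before integrating in $z$. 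The $i=1$ anomaly is then handled by a short direct evaluation. Your approach trades the extra lemma for extra bookkeeping: the binomial top $\binom{d}{j}$ is decoupled from the $SkYT$ shape, so you carry Pascal-type corrections through both the $A$ and $B$ pieces and must check that they combine to the lead term $(-1)^{d-i-1}i\binom{d}{i+1}$ and to the $i=1$ residue.

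The risk in your proposal is that this bookkeeping is asserted rather than executed. After your split $A-B$ and the Pascal correction $\binom{d-i+k}{\ell}=\binom{d+1-i+k}{\ell}-\binom{d-i+k}{\ell-1}$, you will have \emph{four} inner $j$-sums, two of which collapse via Lemma~\ref{lem:skyt_to_sky_count} at $m=1$ and two of which are shifted and need a separate index change before that lemma applies (and in $B$ the $SkYT$ parameter is $d-j-2k$, effectively replacing $d$ by $d-1$, which also shifts the range of $j$). None of this is fatal, but the claim that the resulting closed forms assemble exactly into $i\binom{d}{i+1}$ up to sign, and that the degenerate $m=0$ integral produces precisely $(-1)^{d-2}$ only at $i=1$, is the actual content of the lemma and you have not verified it. If you carry the computation through you should find it works; the paper's three-variable $f(x,y,z)$ packaging simply accomplishes the same cancellation more compactly.
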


The proof for this is very much similar to Lemma \ref{lem:SkYT_Identity}, but especially due to the dependence on the value of $i$, it is worth at least outlining aspects of the proof. 

Here is the corresponding version of Lemma \ref{lem:skyt_to_sky_count}, which has the same proof of Lemma \ref{lem:skyt_to_sky_count}.

\begin{lemma}\label{lem:barskyt_to_sky_count}
\[\# SYT(2,k,d-2k-p-1)=\sum_{j=0}^{d-2k-1}(-1)^{d-1+j}{m+d-p\choose j-p}\# \overline{SkYT}(k,d-j-2k+1).\]
\end{lemma}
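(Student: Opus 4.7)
The plan is to mimic the inclusion-exclusion argument of Lemma \ref{lem:skyt_to_sky_count} almost verbatim, with $SkYT(m+1,k,\cdot)$ replaced by $\overline{SkYT}(k,\cdot)$ and $SYT(m+1,k,\cdot)$ replaced by $SYT(2,k,\cdot)$. The bar-decoration fixes the entry $1$ at the top-left cell and assigns the largest entries (in order) to the tail of the first column; neither of these constraints interacts with the top row or the right column of the tableau, which is exactly where the bijective moves of the earlier proof take place. This is why the excerpt can afford to say that the proof is ``the same.''

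First I would decompose a generic element of $SYT(2,k,d-2k-p-1)$ as a pair $(\lambda,\mu)$ where $\lambda\in\overline{SkYT}(k,2)$ is the body and $\mu$ is a horizontal strip of $d-2k-p-1$ cells attached to the right of the top-right cell of $\lambda$. The leading term $j=d-2k-1$ counts such pairs weighted by the binomial factor, recording the choice of which labels of the ambient alphabet fill $\mu$ (with $1$ reserved for the pinned top-left of $\lambda$). Not every pair is legal, because the last cell of $\lambda$'s top row may exceed the first cell of $\mu$. I would then apply the standard correction: push the offending leftmost cell of $\mu$ into the right column of $\lambda$, yielding a bijection between bad pairs at step $j$ and all pairs at step $j-1$ (with $\mu$ shortened by one and the right column of $\lambda$ lengthened by one, so that $\lambda$ now lies in $\overline{SkYT}(k,d-(j-1)-2k+1)$). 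Iterating this move produces the alternating sum $\sum_{j}(-1)^{d-1+j}\binom{\,\cdot\,}{j-p}\#\overline{SkYT}(k,d-j-2k+1)$; the sign at $j=d-2k-1$ is $(-1)^{2d-2k-2}=+1$, matching the first term, and each subsequent iteration flips the sign.

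The key verification is that these bijective moves never disturb the bar-constraint: each manipulation touches only the top row and the right column of the shape, so the pinned entry $1$ stays at the top-left and the filled tail of the first column stays intact (in particular, $1$ always belongs to $\lambda$, never to $\mu$, since $\mu$ is a strictly increasing row of non-minimal labels). The main obstacle I anticipate is therefore not conceptual but bookkeeping: one must carefully track which entry set $\mu$ is drawn from at each stage once $1$ has been reserved for $\lambda$, and confirm the stated binomial $\binom{m+d-p}{j-p}$ under the intended ambient convention (with the $m$-many ``large'' labels accounted for by the deterministic filling of the left tail in $\overline{SkYT}$). Modulo this accounting, the combinatorial heart of the argument is identical to that of Lemma \ref{lem:skyt_to_sky_count}.
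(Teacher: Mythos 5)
Your plan is the same as the paper's: the paper gives no separate argument for this lemma, asserting only that it ``has the same proof'' as Lemma \ref{lem:skyt_to_sky_count}, and your decomposition into a body $\lambda\in\overline{SkYT}(k,\cdot)$ plus a row $\mu$, with the toggle between the right column of $\lambda$ and the left end of $\mu$, is exactly that argument. Your remark that the bar-conditions survive the toggles is the one point genuinely worth spelling out: the top of the right column is the \emph{other} minimal cell of the shape, so ``$1$ at the top of the first column'' is a real restriction, but it is preserved precisely because $1$ is reserved for $\lambda$ and the moves only exchange non-minimal labels between $\mu$ and the right column.

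However, the bookkeeping you postpone is where the statement as printed breaks, and your instinct to double-check the binomial was right: it cannot be confirmed. The left-hand side $\# SYT(2,k,d-2k-p-1)$ does not depend on $m$, while the right-hand side with $\binom{m+d-p}{j-p}$ does, and the $m$-dependence does not cancel. Concretely, for $d=5$, $k=1$, $p=0$ one has $\# SYT(2,1,2)=9$, while $\sum_{j=0}^{2}(-1)^{4+j}\binom{m+5}{j}\#\overline{SkYT}(1,4-j)=4-3(m+5)+(m+5)(m+4)$, which equals $9$ only for $m=0$. Your own accounting --- ambient alphabet of size $d-p+1$ (the number of cells of $SYT(2,k,d-2k-p-1)$), the label $1$ reserved for the pinned corner of $\lambda$, and $\mu$ receiving $j-p$ labels from the remaining $d-p$ --- produces $\binom{d-p}{j-p}$, and that is also the coefficient the paper actually uses when it invokes this lemma in the proof of Lemma \ref{lem:SkYTbar_Identity} (there $p=i-k$ and the inner binomial is $\binom{d-i+k}{j-i+k}=\binom{d-p}{j-p}$). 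So carry out your argument with $\binom{d-p}{j-p}$; the printed $\binom{m+d-p}{j-p}$ is a typo carried over from Lemma \ref{lem:skyt_to_sky_count}, where the first column really does have height $m+1$.
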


%Note that $\#\overline{SkYT}(0,b)=0$, so we will need to keep this in mind as we prove Lemma \ref{lem:SkYTbar_Identity} since we will have to start the variable $k$ at 1, not 0, for Lemma \ref{lem:barskyt_to_sky_count} to hold.
\begin{proof}[Proof of Lemma \ref{lem:SkYTbar_Identity}]
The start of this proof works similarly to the proof of Lemma \ref{lem:SkYT_Identity}. First, note that \[{j\choose j-i+k}{d \choose j}={d-i+k\choose j-i+k}{d\choose i-k},\]
and so we can push the summand over $j$ past a binomial coefficient getting an inner sum of 
\[\sum_{j=0}^{d-1}(-1)^j{d-i+k \choose j-i+k}\#\overline{SkYT}(k,d-j-2k+1),\]
which equals $(-1)^{d-1}\#SYT(2,k,d-k-i-1)$ by Lemma \ref{lem:barskyt_to_sky_count}. Hence our double sum from the statement of Lemma \ref{lem:SkYT_Identity} becomes 
\[(-1)^{d-i-1}\sum_{k=1}^i (-1)^{k}{d\choose i-k}\# SYT(2,k,d-k-i-1).\]
It is worth noting this can not be recovered from the proof of Lemma \ref{lem:SkYT_Identity}, and so at this point, the proof of this Lemma diverges slightly though we employ similar strategies.

We again apply the hook-length formula to get
\[{d\choose i-k}\# SYT(2,k,d-k-i-1)={d!\over i!(d-i+1)!}{(d-i+k+1)(d-k-i)\over k+1}{i\choose k}\]
and hence proving the Lemma for the case where $i>1$ is equivalent to showing
\begin{align}\label{eq:altsum_bin_bar}
i{d\choose i+1}{i!(d-i+1)!\over d!}=\sum_{k=1}^i (-1)^{k+1}{(d-i+k+1)(d-k-i)\over k+1}{i\choose k}.
\end{align}

This time, we define a function $f(x,y,z)$ so that \[f(x,y,z)=\sum_{k=0}^i{i\choose k} x^{d-i+k+1}y^{d-k-i}z^k.\]
When we differentiate $f$ and $x$ and $y$ and evaluate both at 1, then integrate with respect to $z$ from 0 to $-1$, and remove the term corresponding to $k=0$, we recover the right of equation \eqref{eq:altsum_bin_bar}. On the other had, we can find $f$ explicitly:
\begin{align*}
f(x,y,z)&=\sum_{k=0}^i{i\choose k} x^{d-i+k+1}y^{d-k-i}z^k\\
&=x^{d-i+1}y^{d-i}\sum_{k=0}^i{i\choose k} \left({xz\over y}\right)^k\\
&=x^{d-i+1}y^{d-i}\left(1+{xz\over y}\right)^i.\\
\end{align*}
Define $g(z)$ so that 
\[g(z):={d\over dx}\left({d\over dy} f(x,y,z)\vert_{y=1}\right)\vert_{x=1}=(d-i+1)(d-i)(1+z)^i-2i(1+z)^{i-1}z-i(i-1)(1+z)^{i-2}z^2.\]
(Note here that it is important we are in the case where $i>1$ due to the exponent on the last term of $g(z)$.)
We can directly apply Proposition \ref{prop:integral_identity} to get that
\begin{align*}
\int_0^{-1}g(z)\ dz &= -(d-i+1)(d-i){i!\over (i+1)!}-2i{(i-1)!\over (i+1)!}+i(i-1){2(i-2)!\over (i+1)!}\\
&=-{i!(d-i+1)(d-i)\over (i+1)!}.
\end{align*}
This will not be the right side of equation \eqref{eq:altsum_bin_bar}, because we have to remove the $k=0$ term appearing in the sum in equation \eqref{eq:altsum_bin_bar} first. This term is $-(d-i+1)(d-i)$, so we get \[{i!(d-i+1)(d-i)i\over (i+1)!},\] which one can verify is the left of equation \eqref{eq:altsum_bin_bar}.

The case for $i=1$ can be simplified from the $i>1$ case since 
\[(-1)^{d-i-1}\sum_{k=1}^i (-1)^{k}{d\choose i-k}\# SYT(2,k,d-k-i-1)=
(-1)^{d-1}\# SYT(2,1,d-3).\]
Hence, when $i=1$, the left side of the equality in the statement of Lemma \ref{lem:SkYTbar_Identity} becomes
\[(-1)^{d-2}{d\choose 2}+(-1)^{d-1}\# SYT(2,1,d-3)=(-1)^{d-1}\left({(d+1)(d-2) -d(d-1)\over 2}\right)=(-1)^{d-2}.\]
\end{proof}

\subsection{Proof of Theorem \ref{thm:once_removed}}
\begin{proof}
Let $M=U_{m,d}(\rho)$. Recall that the definition for the KL polynomial is that it satisfies the following recurrence,
\[\displaystyle t^{rk M}P_M(t\inv)=\sum_{F \text{ a flat}} \chi_{M^F}(t)P_{M_F}(t),\]
which may be rewritten as 
\[\displaystyle t^{rk M}P_M(t\inv)-P_M(t)=\sum_{F \text{ a non-empty flat}} \chi_{M^F}(t)P_{M_F}(t).\]

Recall that $\deg P(t)<{1\over 2} d$, and so the power of each monomial in $t^{d}P_M(t\inv)$ is strictly larger than ${1\over 2}d$. Hence, our goal is to show that for $0\leq i< {1\over 2} d$ we have 
\begin{align}\label{eq:step1}
-\#SkYT(m+1,i,d-2i+1)+\rho\#\overline{SkYT}(i,d-2i+1)=[t^i]\sum_{F \text{ a non-empty flat}} \chi_{M^F}(t)P_{M_F}(t).
\end{align}

Using our work from Proposition \ref{prop:rest_local_orum}, and consolidating common factors involving the various flats $[d]_\ell$, we can rewrite the right of equation \eqref{eq:step1} to be 
\begin{align}\label{eq:step2}
[t^i]\chi_{U_{m,d}(\rho)}+\rho[t^i]\chi_{U_{1,d-1}}P_{U_{m-1,1}}+\rho\sum_{\substack{\emptyset<F<[d]}}[t^i]\chi_{U_{0,|F|}}P_{U_{m,d-|F|}(1)}+\sum_{\substack{\emptyset<F<[m+d]\\ F\nsubseteq [d]_\ell\ \forall \ell}}[t^i]\chi_{U_{0,|F|}}P_{{U_{m,d-|F|}}},
\end{align}
where the first term corresponds to the case where $F=[m+d]$, and the second where $F=[d]_\ell$ for any $\ell$. Here, we are leveraging the fact that the members of the set $\{[d]_0,[d]_1,\dots, [d]_{\rho-1}\}$ are disjoint, giving rise to the multiples of $\rho$ appearing in \eqref{eq:step2}.

By Proposition \ref{prop:charpoly_restated}, we are required to break this up into three case: $i=0$, $i=1$, and $2\leq i<d/2$ if we are to write this out explicitly. Note we can write everything explicitly except $P_{{U_{m,d-|F|}(1)}}$. Hence, we proceed by induction on the matroid rank $d$, noting that $d>d-|F|$ since for the corresponding summand $\emptyset\subsetneq F\subsetneq[d]$.

We now rewrite equation \eqref{eq:step2} for the three cases. Using the Kronecker Delta function $\delta(i,j)=\begin{cases} 1& i=j\\ 0& i\neq j\\\end{cases}$ we can combine the cases for $i=1$ and $2\leq i<d/2$.
\begin{enumerate}
\item[$i=0$:] 
\begin{align*}
&(-1)^d{m+d-1\choose d-1}-\rho(-1)^{d}+\rho(-1)^{d-1}{d-1\choose d-2}\\
&+\rho\sum_{j=1}^{d-2}{d\choose j}(-1)^j(\#SkYT(m+1,0,d-j+1)-\#\overline{SkYT}(0,d-j+1))\\
&+\sum_{j=1}^{d-1}\left({m+d\choose j}-\rho{d\choose j}\right)(-1)^j\#SkYT(m+1,0,d-j+1)
\end{align*}

\item[$i>0$:]\begin{align*}
&(-1)^{d-i}{m+d-1\choose d-i}-\rho(-1)^{d-1}\delta(i,1)+\rho(-1)^{d-1-i}{d\choose d-1-i}\\
&+\rho\sum_{j=1}^{d-2}{d\choose j}\sum_{k=0}^i(-1)^{j-i+k}{j\choose j-i+k}(\#SkYT(m+1,k,d-j-2k+1)-\#\overline{SkYT}(k,d-j-2k+1))\\
&+\sum_{j=1}^{d-1}\left({m+d\choose j}-\rho{d\choose j}\right)\sum_{k=0}^i(-1)^{j-i+k}{j\choose j-i+k}\#SkYT(m+1,k,d-j-2k+1)
\end{align*}
\end{enumerate}

In both cases, the sum running from $j=1$ to $j=d-2$ is the summand in equation \eqref{eq:step2} over $\emptyset<F<[d]$, since the flats contained in $[d]$ have size at most $d-2$, as $[d]$ is not independent, making every element of \({[d]\choose d-1}\) not a flat. The latter sum running from $j=1$ to $j=d-1$ corresponds to the summand in equation \eqref{eq:step2} over $\emptyset<F<[m+d]$ such that $F\subsetneq [d]$. This justifies the binomial coefficients for both cases.

For the case for $i=0$, note that our conventions simplify the formula to be
\begin{align*}
&(-1)^d{m+d-1\choose d-1}-\rho(-1)^{d}+\rho(-1)^{d-1}{d-1\choose d-2}+\rho\sum_{j=1}^{d-2}{d\choose j}(-1)^j+\sum_{j=1}^{d-1}\left({m+d\choose j}-\rho{d\choose j}\right)(-1)^j\\
&=(-1)^d{m+d-1\choose d-1}-\rho(-1)^{d}+\rho(-1)^{d-1}{d-1\choose d-2}-\rho(-1)^{d-1}{d\choose d-1}+\sum_{j=1}^{d-1}{m+d\choose j}(-1)^j\\
&=(-1)^d{m+d-1\choose d-1}-1+\sum_{j=0}^{d-1}{m+d\choose j}(-1)^j.\\
\end{align*}
We have seen these terms before---as a porism to Proposition \ref{prop:charpoly}, this prior sum simplifies to be $-1$ (see the last lines of the proof to the Proposition). We expected this---the constant term of the KL polynomial is always 1 \cite[Proposition 2.11]{firstklpolys}. Moreover, by the conventions we have taken, note that 
\[-\#SkYT(m+1,0,d+1)+\rho\#\overline{SkYT}(m+1,0,d+1)=-1,\]
as desired.

For the case where $i> 0$, we have done all the hard work in the prior subsection---now what is left is to simply rewrite the equation to be able to utilize the identities. We first combine the parts with the ${d\choose j}$ and $\#SkYT$ to get rewrite it as
\begin{align*}
&(-1)^{d-i}{m+d-1\choose d-i}-\rho(-1)^{d-1}\delta(i,1)+\rho(-1)^{d-1-i}{d\choose d-1-i}\\
&-\rho\sum_{j=1}^{d-2}{d\choose j}\sum_{k=0}^i(-1)^{j-i+k}{j\choose j-i+k}\#\overline{SkYT}(k,d-j-2k+1)\\
&+\sum_{j=1}^{d-1}{m+d\choose j}\sum_{k=0}^i(-1)^{j-i+k}{j\choose j-i+k}\#SkYT(m+1,k,d-j-2k+1)\\
&-\rho{d\choose d-1}\sum_{k=0}^i(-1)^{d-1-i+k}{d-1\choose d-1-i+k}\#SkYT(m+1,k,2-2k)\\
\end{align*}
For this last term, observe that $\#SkYT(m+1,k,2-2k)$ takes on two values: 1 if $k=0$, and 0 otherwise (since $k>1$ implies $2-2k<2$). The same casework allows us to extend the index for the summand for $\#\overline{SkYT}$ to $d-1$, since in either of the cases $\#\overline{SkYT}(k,2-2k)=0$. Note further that if we extend our sums over $j$ so that they start at $j=0$, since ${0\choose 0-i+k}=\delta(i,k)$, the extra terms we get are $\#SkYT(m+1,i,d-2i+1)$ and $-\rho\#\overline{SkYT}(m+1,i,d-2i+1)$. But recall that our goal is to show the above sum equals $-\#SkYT(m+1,i,d-2i+1)+\rho\#\overline{SkYT}(i,d-2i+1)$, so effectively all we have done by allowing $j$ to start at 0 was change the problem to show the following is true:
\begin{align*}
0=&(-1)^{d-i}{m+d-1\choose d-i}-\rho(-1)^{d-1}\delta(i,1)+\rho(-1)^{d-1-i}{d\choose d-1-i}-\rho(-1)^{d-1-i}{d\choose d-1}{d-1\choose d-1-i}\\
&-\rho\sum_{j=0}^{d-1}{d\choose j}\sum_{k=0}^i(-1)^{j-i+k}{j\choose j-i+k}\#\overline{SkYT}(k,d-j-2k+1)\\
&+\sum_{j=0}^{d-1}{m+d\choose j}\sum_{k=0}^i(-1)^{j-i+k}{j\choose j-i+k}\#SkYT(m+1,k,d-j-2k+1).\\
\end{align*}
Note that ${d\choose d-1-i}-{d\choose d-1}{d-1\choose d-1-i}=-i{d\choose i+1}$ and $\# \overline{SkYT}(k,d-j-2k+1)=0$ when $k=0$, and so this equality, and hence the theorem, is true by the identities from Lemmas \ref{lem:SkYT_Identity} and \ref{lem:SkYTbar_Identity}.
\end{proof}


\begin{thebibliography}{99}
\bibitem{deletion}
T. Braden, A. Vysogortes
\textit{Kazhdan-Lusztig Polynomials of Matroids Under Deletion.} arXiv: 1909.09888.

\bibitem{firstklpolys}
B. Elias, N. Proudfoot, M. Wakefield, 
\textit{The Kazhdan-Lusztig polynomial of a matroid}. 
Adv. Math. 299 (2016), 36–-70.

\bibitem{thag}
K. Gedeon
\textit{Kazhdan-Lusztig polynomials of thagomizer matroids}.
Electron. J. Combin. 24 (2017), no. 3, Paper 3.12, 10 pp.
05B35 (05A15 05E05 20C30)


\bibitem{klum} 
A. Gao, L. Lu, M. Xie, A. Yang, P. Zhang.
\textit{The Kazhdan-Lusztig polynomials of uniform matroids}. arXiv: 1806.10852.

\bibitem{klpolysequiv}
K. Gedeon, N. Proudfoot, B. Young,
The equivariant Kazhdan-Lusztig polynomial of a matroid. 
J. Combin. Theory Ser. A 150 (2017), 267–-294.
05B35 (05E05 05E18 20C30)

\bibitem{klpolys}
K. Gedeon, N. Proudfoot, B. Young
\textit{Kazhdan-Lusztig polynomials of matroids: a survey of results and conjectures.} arXiv: 1806.10852.

\bibitem{stirling}
T. Karn, M. Wakefield, 
\textit{Stirling numbers in braid matroid Kazhdan-Lusztig polynomials}. 
Adv. in Appl. Math. 103 (2019), 1-–12.


\bibitem{fanwheelwhirl}
L. Lu, M. Xie, A. Yang
\textit{Kazhdan-Lusztig polynomials of fan matroids, wheel matroids
and whirl matroids}. arXiv: 1802.03711.

\bibitem{kls}
N. Proudfoot, 
\textit{The algebraic geometry of Kazhdan-Lusztig-Stanley polynomials}, 
EMS Surv. Math. Sci. 5 (2018), no. 1, 99-–127.

\bibitem{qniform}
N. Proudfoot
\textit{Equivariant Kazhdan-Lusztig polynomials of $q$-niform matroids.} arXiv: 1808.07855

\bibitem{S}
R. Stanley, \textit{Polygon dissections and standard Young tableaux},  
Journal of Combinatorial Theory, series A \textbf{76} (1996), 175--177. 

\bibitem{pfia}
M. Wakefield
\textit{Partial flag incidence algebras.} arXiv: 1605.01685

\bibitem{equithag}
M. Xie, P. Zhang
 \textit{Equivariant Kazhdan-Lusztig polynomials of thagomizer matroids}.
Proc. Amer. Math. Soc. 147 (2019), no. 11, 4687-–4695.
05B35 (05E05 20C30)

\end{thebibliography}
\end{document}